\newcommand{\tr}{{\rm tr}}
\renewcommand{\div}{{\rm div}}
\newcommand{\bcurl}{{\bf curl}}
\newcommand{\R}{\mathds{R}}
\newcommand{\N}{\mathds{N}}
\renewcommand{\R}{{\rm I\kern-.25em R}}
\renewcommand{\N}{{\rm I\kern-.25em N}}
\newcommand{\bpsi}{\mbox{\boldmath$\psi$}}
\newcommand{\bvarphi}{\mbox{\boldmath$\varphi$}}
\newcommand{\bPi}{\mbox{\boldmath$\Pi$}}
\newcommand{\bXi}{\mbox{\boldmath$\Xi$}}
\newcommand{\bxi}{\mbox{\boldmath$\xi$}}
\newcommand{\bzeta}{\mbox{\boldmath$\zeta$}}
\newcommand{\bgamma}{\mbox{\boldmath$\gamma$}}
\newcommand{\brho}{\mbox{\boldmath$\rho$}}
\newcommand{\bsigma}{\mbox{\boldmath$\sigma$}}
\newcommand{\btheta}{\mbox{\boldmath$\theta$}}
\newcommand{\bnabla}{\mbox{\boldmath$\nabla$}}
\newcommand{\bzero}{{\bf 0}}
\newcommand{\bff}{{\bf f}}
\newcommand{\bv}{{\bf v}}
\newcommand{\bx}{{\bf x}}
\newcommand{\bz}{{\bf z}}
\newcommand{\bu}{{\bf u}}
\newcommand{\bg}{{\bf g}}
\newcommand{\bq}{{\bf q}}
\newcommand{\bs}{{\bf s}}
\newcommand{\be}{{\bf e}}
\newcommand{\ba}{{\bf a}}
\newcommand{\bB}{{\bf B}}
\newcommand{\bF}{{\bf F}}
\newcommand{\bI}{{\bf I}}
\newcommand{\bJ}{{\bf J}}
\newcommand{\bM}{{\bf M}}
\newcommand{\bP}{{\bf P}}
\newcommand{\bQ}{{\bf Q}}
\newcommand{\bR}{{\bf R}}
\newcommand{\bS}{{\bf S}}
\newcommand{\bV}{{\bf V}}
\newcommand{\bX}{{\bf X}}
\newcommand{\bZ}{{\bf Z}}
\newcommand{\bn}{{\bf n}}
\newcommand{\bt}{{\bf t}}
\newcommand{\cI}{{\cal I}}
\newcommand{\cL}{{\cal L}}
\newcommand{\cP}{{\cal P}}
\newcommand{\cS}{{\cal S}}
\newcommand{\cT}{{\cal T}}
\newcommand{\cV}{{\cal V}}
\begin{document}

\title{Weakly symmetric stress equilibration for hyperelastic material models%
\protect\thanks{The authors gratefully acknowledge support by the German Research Foundation (DFG) in the Priority Programm SPP 1748
`Reliable simulation techniques in solid mechanics' under grant numbers BE6511/1-1 and STA 402/14-1.}}

\author[1]{Fleurianne Bertrand}

\author[2]{Marcel Moldenhauer}

\author[2]{Gerhard Starke*}

\authormark{F. Bertrand, M. Moldenhauer, and G. Starke}

\address[1]{\orgdiv{%
Institut f\"ur Mathematik}, \orgname{Humboldt-Universit\"at zu Berlin}, \orgaddress{Unter den Linden 6, 10099 Berlin, \country{Germany}.\\
\email{fb@math.hu-berlin.de}}}

\address[2]{\orgdiv{%
Fakult\"at f\"ur Mathematik}, \orgname{Universit\"at Duisburg-Essen}, \orgaddress{Thea-Leymann-Str. 9, 45127 Essen, \country{Germany}.\\
\email{marcel.moldenhauer@uni-due.de}}}

\corres{*Gerhard Starke, Fakult\"at f\"ur Mathematik, Universit\"at Duisburg-Essen, Thea-Leymann-Str. 9, 45127 Essen, Germany.
\email{gerhard.starke@uni-due.de}}


\abstract{A stress equilibration procedure for hyperelastic material models is proposed and analyzed in this paper. Based on the
  displacement-pressure approximation computed with a stable finite element pair, it constructs, in a vertex-patch-wise manner, an
  $H (\div)$-conforming approximation to the first Piola-Kirchhoff stress. This is done in such a way that its associated Cauchy stress
  is weakly symmetric in the sense that its anti-symmetric part is zero tested against continuous piecewise linear functions. Our main
  result is the identification of the subspace of test functions perpendicular to the range of the local equilibration system on each patch
  which turn out to be rigid body modes associated with the current configuration. Momentum balance properties are investigated
  analytically and numerically and the resulting stress reconstruction is shown to provide improved results for surface traction forces
  by computational experiments.}

\keywords{Stress equilibration, hyperelasticity, weak symmetry, Raviart-Thomas elements}


\maketitle


\section{Introduction}

\label{sec-introduction}

This paper is concerned with a stress equilibration procedure for hyperelastic material models in nonlinear solid mechanics. It extends the
approach proposed and studied in our earlier work \cite{BerKobMolSta:19} to the case of geometrically and materially nonlinear elasticity in the
form of a hyperelastic
material law. Due to the fact that the symmetry condition does not hold for the first Piola-Kirchhoff stress (which is the result from the reconstruction
process) but for the Cauchy stress, the use of symmetric stress elements is not feasible anymore in the hyperelastic case. The weak symmetry
condition from linear elasticity can, however, be generalized to a suitable constraint for the Piola-Kirchhoff stress as is done in this
contribution. To the best of our knowledge, our contribution is the first attempt to develop a stress equilibration procedure for the hyperelastic
situation. Our hope is that this will be of use for the development of an a posteriori error estimator for hyperelastic problems in the future. The
issue of a posteriori error estimation and adaptive refinement is, however, beyond the scope of this contribution.

Expressing the internal forces of a material, the components of the stress-tensor are crucial for the prediction of the weakening of a material,
including plastic behavior or damage. A specific application area where this is an issue is associated with implant shape design which constitutes
an optimal control problem, see \cite{LubSchWei:14}.
Therefore, the accurate approximation of the stress-tensor is of strong importance in numerous applications
and in particular in the hyperelastic material model this paper is concerned with. The mathematical foundations of hyperelastic material models in
solid mechanics are covered, e.g., in the books by Marsden and Hughes \cite{MarHug:83} and Ciarlet \cite{Cia:88}. The numerical treatment of
the associated variational problems are investigated in detail by Le Tallec \cite{LeT:94}. Specifically for incompressible hyperelasticity, issues
connected to the use of displacement-pressure formulations are discussed in \cite{AurBeiLovReaTayWri:13}. A priori analysis of numerical
methods are available under restrictive assumptions, see Carstensen and Dolzmann \cite{CarDol:04} and, for a least-squares finite element
approach, M\"uller et.al. \cite{MueStaSchSch:14}.

Common displacement-based approaches or, in the incompressible regime, mixed displacement-pressure formulations for this model lead to
approximations of the stresses that are not $H (\div)$-conforming, i.e., have discontinuities of the normal components on the interface between
two elements. In particular, this means on the one hand that they do not control momentum conservation and on the other hand that the normal
component of the boundary traces are not well-defined implying that the approximation of the surface traction forces can also not be guaranteed.
In contrast to variational principles involving a direct approximation of the stress in an $H (\div)$-conforming space (see Chapter 9 of the
monograph \cite{BofBreFor:13} for an overview), this paper proposes an algorithm to obtain an $H (\div)$-conforming approximation of the
stress-tensor by post-processing the displacement-based approximation.

The idea of reconstructing the matrix-valued stress and vector-valued flux goes back to the hypercircle theorem by Prager and Synge
\cite{PraSyn:47} (see also Section III.9 in Braess' book \cite{Bra:07} for a presentation in modern mathematical language). Besides the accurate
approximation in an $H (\div)$-conforming space, the stress or flux reconstruction builds the basis of an a posteriori error estimator, which
was actually already one of the motivations of Prager and Synge \cite{PraSyn:47}.
Over the years, a posteriori error estimators based on flux reconstruction were explored in detail in many contributions
\cite{LadLeg:83,AinOde:93,LucWoh:04,CaiZha:12a,ErnVoh:15}. An important algorithmic innovation was given by Braess and Sch\"oberl
\cite{BraSch:08} by the equilibration procedure which is completely local and provides the link to residual error estimation. An important aspect
of the use of reconstruction-based error estimation of the above type is that it provides guaranteed upper bounds for the error with accessible
constants. Another important aspect is that these a posteriori error estimators are valid for any approximation that is inserted into the procedure.
In particular, it does not assume that the underlying finite-dimensional variational problems are solved to high precision.
The extension of reconstruction strategies to linear elasticity was the subject of a number of contributions in the last two decades
\cite{ParBonHuePer:06,NicWitWoh:08,Kim:11a,Kim:11b,AinAllBarRan:12}, stress reconstruction in the context of Stokes flow was also studied
recently \cite{HanSteVoh:12}. More recently, a posteriori error estimation based on the reconstruction of weakly symmetric stresses was investigated
in our earlier work \cite{BerMolSta:19a} and \cite{BerKobMolSta:19}. In particular, the stress equilibration procedure considered in our recent
contribution \cite{BerKobMolSta:19} serves as a point of departure for our treatment of hyperelastic material models in the present paper.
The recent paper by Botti and Riedlbeck \cite{BotRie:19} should also be mentioned here. It treats nonlinear elasticity restricted to a geometrically linear
situation. In that case, the (Piola-Kirchhoff) stress is still symmetric which allows the use of symmetric stress elements as it is done in the approach by
Botti and Riedlbeck \cite{BotRie:19}.

We emphasize once more that our paper does not discuss the issue of a posteriori error estimation. The development of an a posteriori error estimator
based on the stress equilibration for hyperelastic material models and, in particular, its analysis are expected to be rather involved and to require rather
restrictive assumptions. After all, it is well-known that the solution of the variational problem may not be unique (see the examples in Chapter 5 in \cite{Cia:88}).
We nevertheless hope that our stress reconstruction procedure will be of use for the future study of such an a posteriori error estimator.
For the time being, we concentrate on other motivations for the use of equilibrated stresses like the enhanced accuracy of surface force approximations
which will be studied in detail. Other approaches to the direct finite element approximation of stresses in geometrically nonlinear elasticity can be found
e.g. in \cite{HauHec:13} and \cite{MueStaSchSch:14}.

Besides the fact that the symmetry condition for the stresses becomes more complicated in the geometrically and materially nonlinear situation
associated with hyperelastic models which was already mentioned above, other challenging issues arise if one wants to extend the stress
equilibration procedure from our recent work \cite{BerKobMolSta:19} to that case. The stresses computed directly from displacement and, possibly,
pressure approximations are no longer piecewise polynomial due to the nonlinearity of the model. Therefore, in order to get a stress reconstruction
in an appropriate $H (\div)$-conforming finite element space, a suitable projection to piecewise polynomial stresses need to be carried out first.
Another problem is concerned with the subspace of test functions which are perpendicular to the range of the local equilibration systems for vertex
patches not connected to the Dirichlet boundary. The main result of this contribution is the identification of these subspaces as associated with rigid
body modes in the current configuration, i.e., involving the displacement approximations. The right-hand sides arising from straightforward piecewise
polynomial projections of the stresses are shown to have components outside of these ranges which means that the local equilibration systems
possess an additional compatibility error. We propose a remedy involving a more complicated test space to overcome this problem. This leads
to compatible local problems and thus to a truly equilibrated stress reconstruction.

The outline of this paper is as follows. We start with the variational formulation of elastic deformations governed by hyperelastic material models and
the weakly symmetric stress reconstruction in Section \ref{sec-hyperelasticity_stress}. Section \ref{sec-local_equilibration} presents the local
equilibration algorithm. The solvability of the local problems on vertex patches is analysed in \ref{sec-solvability}. In particular, the subspace of
test functions orthogonal to the range of the local operators associated with equilibration is identified and this result is used for the investigation
of the compatibility of the right-hand side.
Section \ref{sec-remedy} proposes our remedy to deal with this problem and derives a more complicated test space which leads to compatible
local equilibration systems for which an inf-sup condition holds.
The improved accuracy of the surface forces associated with the equilibrated stresses will be the topic of Section \ref{sec-surface_forces}.
Finally, computational results illustrating the properties of the equilibrated stresses are collected in Section \ref{sec-computational}.

\section{Hyperelasticity and weakly symmetric stress reconstruction}

\label{sec-hyperelasticity_stress}

The hyperelastic problems under our consideration are based on an open, bounded and connected domain
$\Omega \subset \R^d$ ($d = 2,3$) with Lipschitz-continuous boundary which constitutes the reference configuration of
the undeformed state. The boundary is divided into two disjoint non-empty subsets $\Gamma_D$ and $\Gamma_N$. On $\Gamma_D$,
homogeneous displacement boundary conditions $\bu = \bzero$ are imposed, while surface traction forces $\bP \cdot \bn = \bg$
are prescribed on $\Gamma_N$. For an appropriate subspace $\bV \subset H_{\Gamma_D}^1 (\Omega)^d$,
the boundary value problem of hyperelasticity then consists in the variational problem of finding
$\bu \in \bV$ such that
\begin{equation}
  ( \bP (\bu) , \bnabla \bv ) = ( \bff , \bv ) + \langle \bg , \bv \rangle_{0,\Gamma_N}
  \label{eq:variational_condition}
\end{equation}
holds for all $\bv \in \bV$. Here, $\bP(\bu) = \partial_\bF \psi (\bB)$ denotes the first Piola-Kirchhoff stress tensor with
respect to the stored energy function $\psi : \R^{d \times d}_{\text{sym}} \to \R$, where the deformation gradient is given by
$\bF (\bu) = \bI + \bnabla \bu$ and the left Cauchy-Green
strain tensor is defined as $\bB (\bu) = \bF (\bu) \bF(\bu)^T$. Simple brackets $( \: \cdot \: , \: \cdot \: )$ as in (\ref{eq:variational_condition}) will
from now on always abbreviate the inner product in $L^2 (\Omega)$ with respect to the reference configuration; $\bff$ and $\bg$ stand for volume
and surface loads, transformed back to the reference configuration. An example of a stored energy function which we will also use later in our
computations in Section \ref{sec-computational} is associated with the Neo-Hookean model
\begin{equation}
  \psi_{NH} (\bB) = \frac{1}{2} \left( \mu \: \tr \: \bB + \frac{\lambda}{2} \det (\bB) - \left( \mu + \frac{\lambda}{2} \right) \ln ( \det (\bB) ) \right) \: .
\end{equation}
In this case, the Piola-Kirchhoff stress tensor is given by
\begin{equation}
  \bP (\bu) = \partial_\bF \psi_{NH} (\bB (\bu)) = \mu \bF (\bu) + \left( \frac{\lambda}{2} (\det (\bB (\bu)) - 1) - \mu \right) \bF (\bu)^{-T}
  \label{eq:Neo_Hooke_stress_strain}
\end{equation}
and $\bV = W_{\Gamma_D}^{1,4} (\Omega)^d$ would be sufficient for the variational problem (\ref{eq:variational_condition}) to
be properly defined. In order to deal with materials in the incompressible parameter regime ($\lambda \gg \mu$), a pressure-like
variable may be introduced, e.g. by setting $p = \lambda (\det (\bF (\bu)) - 1)$. Note that with this choice, $p$ does not really stand for the
physical pressure but that it is possible to obtain the pressure from $p$ even in the incompressible limit. The above choice is motivated from
the fact that it turns into the constraint $p = \div \: \bu$, familiar from linear elasticity, in the small strain limit. Other options for the definition of
$p$ are possible and may have advantages. The Piola-Kirchhoff stress is now given in terms of $\bu$ and $p$ which, in the Neo-Hookean
example, reads
\begin{equation}
  \bP (\bu,p) = \mu \bF (\bu) + \left( p \left( 1 + \frac{p}{2 \lambda} \right) - \mu \right) \bF (\bu)^{-T}
  \label{eq:Neo_Hooke_pressure}
\end{equation}
due to the fact that $\det (\bB (\bu)) - 1 = \det (\bF (\bu))^2 - 1 = (\det (\bF (\bu)) - 1) (\det (\bF (\bu)) + 1)$ holds. With a pressure space $Q$
($Q = L^{4/3} (\Omega)$ would be appropriate in the Neo-Hooke case), the variational problem turns into
one of saddle point type which consists in finding $\bu \in \bV$ and $p \in Q$ such that
\begin{equation}
  \begin{split}
    ( \bP (\bu,p) , \bnabla \bv ) & = ( \bff , \bv ) + \langle \bg , \bv \rangle_{0,\Gamma_N} \mbox{ for all } \bv \in \bV \: ,\\
    ( \det (\bF (\bu)) - 1 , q ) - \frac{1}{\lambda} ( p , q ) & = 0 \hspace{2.3cm} \mbox{ for all } q \in Q^\prime
  \end{split}
  \label{eq:variational_condition_pressure}
\end{equation}
with $Q^\prime$ denoting the dual space of $Q$ ($Q^\prime = L^4 (\Omega)$ with the above choices for the Neo-Hooke case).

For $k \geq 1$, let $\bV_h \subset \bV$ be the subspace of continuous piecewise polynomials of degree $k+1$ with
respect to a triangulation $\cT_h$ for each component of $\bV_h$. Our finite-dimensional variational problem for hyperelasticity consists in finding
$\bu_h \in \bV_h$ such that
\begin{equation}
  ( \bP (\bu_h) , \nabla \bv_h ) = ( \bff , \bv_h ) + \langle \bg , \bv_h \rangle_{0,\Gamma_N}
  \label{eq:Galerkin_condition}
\end{equation}
holds for all $\bv_h \in \bV_h$. In the incompressible regime, a discrete pressure space $Q_h$ consisting of continuous piecewise polynomials of
degree $k$ may be used to define a corresponding discrete saddle point problem. It consists in finding $(\bu_h , p_h) \in \bV_h \times Q_h$ such
that
\begin{equation}
  \begin{split}
    ( \bP (\bu_h,p_h) , \bnabla \bv_h ) & = ( \bff , \bv_h ) + \langle \bg , \bv_h \rangle_{0,\Gamma_N} \mbox{ for all } \bv_h \in \bV_h \: ,\\
    ( \det (\bF (\bu_h)) - 1 , q_h ) - \frac{1}{\lambda} ( p_h , q_h ) & = 0 \hspace{2.25cm} \mbox{ for all } q_h \in Q_h
  \end{split}
  \label{eq:Galerkin_condition_pressure}
\end{equation}
is satisfied. The direct use of $\bP (\bu_h)$ or, in the incompressible regime, $\bP (\bu_h,p_h)$ as an approximation for the Piola-Kirchhoff stress,
has, however, certain deficiencies which are already known from the linear elasticity situation. Most importantly, $\bP (\bu_h) \cdot \bn$ is not
continuous at interfaces between elements of the underlying triangulation implying that traction forces are not well-defined. It also means that
$\bP (\bu_h)$ is not $H (\div)$-conforming and that the conservation of momentum is not controlled. This motivates the need to construct an
$H (\div)$-conforming stress reconstruction $\bP_h^R$ with all these desired properties.

The idea of equilbration is to compute the reconstructed stress $\bP_h^R$ in the $H (\div)$-conforming Raviart-Thomas space of degree $k$ as an
additive correction to $\bP (\bu_h)$. This is done using the broken Raviart-Thomas space of degree $k$ for each row leading to
\begin{align*}
  \bPi_h^\Delta
  = \{ \bP_h : \Omega \rightarrow \R^{d \times d} \mbox{ with } \left. \bP_h \right|_T \in P_k (T)^{d \times d} + P_k (T)^d \bx^T \} \: ,
\end{align*} 
where $P_k (T)$ denotes the space of polynomials of degree $k$ on the triangle ($d=2$) or tetrahedron ($d=3$) $T$. In other words, each row of the
stress tensor $\bP_h \in \bPi_h^\Delta$ is element-wise given by a function in the Raviart-Thomas space. Unfortunately, in contrast to the linear elasticity
situation, $\bP (\bu_h) \in \bPi^\Delta$ does not hold, in general, due to the nonlinearity of the stress-strain relation. Obviously, for the Neo-Hookean
model in (\ref{eq:Neo_Hooke_stress_strain}), $\bP (\bu_h)$ is not even piecewise polynomial. Therefore, $\bP (\bu_h)$ needs to be projected first
to an element $\widehat{\bP}_h (\bu_h) \in \bPi_h^\Delta$. An obvious candidate would be to set $\widehat{\bP}_h (\bu_h) = \cP_h^k \bP (\bu_h)$,
where $\cP_h^k$ denotes the component-wise and element-wise $L^2$-orthogonal projection onto $P_k (T)$. We will stick with this choice of
$\widehat{\bP}_h (\bu_h)$ for the moment until we present an alternative one in Section \ref{sec-remedy} as a remedy for certain deficiencies
associated with it.

Following the weakly symmetric equilibration procedure from \cite{BerKobMolSta:19}, we perform the construction for the difference
${\bP}_h^\Delta := \bP_h^R - \widehat{\bP}_h (\bu_h)$ between the reconstructed and the projected original stress.
Recall that the extension of the hypercircle theorem to linear elasticity requires a symmetric reconstruction satisfying the
equilibration condition $\div \: \bP_h^\Delta= -  \bff - \div \: \widehat\bP_h (\bu_h)$ in each triangle and the jump condition
allowing $\bP_h^R$ to be $H (\div)$-conforming. In order to write this jump condition in a precise way, let $\cS_h$ denote the set of all sides
(edges in 2D and faces in 3D) of the triangulation $\cT_h$ and $\cS_h^\ast$ the set of sides not contained in $\Gamma_D$
\[
  \cS_h^\ast := \{ S \in \cS_h: S \nsubseteq \Gamma_D \} \: .
\]
Further, for all sides $S \in \cS_h$, let $\bn$ be the normal direction associated with $S$ (depending on its orientation), $T_+$ and $T_-$  the
elements adjacent to $S$ (such that $\bn$ points into $T_+$) and the jump of $\bP_h$ over $S$ defined by
\begin{equation}
  \llbracket \bP_h \cdot \bn \rrbracket_S = \left. \bP_h \cdot \bn \right|_{T_-} - \left. \bP_h \cdot \bn \right|_{T_+} \: .
  \label{eq:definition_jump}
\end{equation}
For sides $S \subset \Gamma_N$ located on the Neumann boundary we assume that $\bn$ points outside of $\Omega$ and define the jump by
\[
  \llbracket \bP_h \cdot \bn \rrbracket_S = \left. \bP_h \cdot \bn \right|_{T_-} \: .
\]
In order to use the same formulas also for patches adjacent to the Neumann boundary $\Gamma_N$ we define the auxiliary jump by
\begin{equation}
  \llbracket \bP_h \cdot \bn \rrbracket_S^\ast = \left\{ \begin{array}{lcr}
    \left. \bP_h \cdot \bn \right|_{T_-} - \bg & , \mbox{ if } & S \subset \Gamma_N \: , \\
    \llbracket \bP_h \cdot \bn \rrbracket_S & , \mbox{ if } & S \nsubseteq \Gamma_N \: .
  \end{array} \right.
  \label{eq:definition_jump_N}
\end{equation}
With this, the jump condition for the correction reads
$\llbracket \bP_h^\Delta \cdot \bn \rrbracket_S = -  \llbracket  \widehat\bP_h (\bu_h) \cdot \bn \rrbracket_S^\ast$ for all sides $S \in \cS_{h}^\ast$.

Similarly as in \cite{BerKobMolSta:19}, the symmetry condition will be imposed weakly in order to obtain a reconstructed stress with reasonable
symmetry properties. In the hyperelastic setting, symmetry does not hold for $\bP (\bu)$ but instead for the related Cauchy stress tensor
$\bsigma (\bu) = \bP (\bu) \bF (\bu)^T / \det (\bF (\bu))$ which adequately describes stresses in the deformed configuration. Rewritting the
equilibration and jump conditions in a weak form and applying the weak symmetry condition to $\bP_h^R \bF (\bu)^T$ leads to the following
conditions for $\bP_h^\Delta$:
\begin{equation}
  \begin{split}
    ( \div \: \bP_h^\Delta , \bz_h )_{h}
    & = - ( \bff + \div \: \widehat\bP_h (\bu_h) , \bz_h )_{h} \hspace{0.4cm} \mbox{ for all } \bz_h \in \bZ_h \: , \\
    \langle \llbracket \bP_h^\Delta \cdot \bn \rrbracket_S , \bzeta \rangle_S
    & = - \langle \llbracket  \widehat\bP_h (\bu_h) \cdot \bn \rrbracket_S^\ast , \bzeta \rangle_S \hspace{0.6cm} \mbox{ for all }
    \bzeta \in P_k (S)^d \: , \: S \in \cS_h^\ast \: , \\
    ( \bP_h^\Delta \bF(\bu_h)^T , \bJ (\bgamma_h) ) & = - ( \widehat{\bP}_h (\bu_h) \bF(\bu_h)^T , \bJ (\bgamma_h) )
    \mbox{ for all } \bgamma_h \in \bX_h \: .
  \end{split}
  \label{eq:equilibration_conditions}
\end{equation}
$\bZ_h$ may be chosen to be the space of discontinuous $d$-dimensional vector functions which are piecewise polynomial of degree $k$, and
$\bX_h$ may stand for the continuous $d (d-1)/2$-dimensional vector functions which are piecewise polynomial of degree $k$ with $\bJ (\btheta)$
being defined by 
\begin{equation}
  \bJ (\theta) := \begin{pmatrix} \;\;0 & \theta \\ -\theta & 0 \end{pmatrix} \mbox{ for } d = 2 \mbox{ and }
  \bJ (\btheta) :=
  \begin{pmatrix} \;\;0 & \;\;\theta_3 & -\theta_2 \\ -\theta_3 & \;\;0 & \;\;\theta_1 \\ \;\;\theta_2 & -\theta_1 & \;\;0 \end{pmatrix}
  \mbox{ for } d = 3
\label{eq:skew_symmetric_tensor}
\end{equation}
for every $d (d-1)/2$-dimensional vector $\btheta$. This choice is motivated by the inf-sup stability of the corresponding combination with the use
of Raviart-Thomas element of degree $k \geq 1$ as stress approximation space in the Hellinger-Reissner formulation (see Boffi, Brezzi and Fortin
\cite{BofBreFor:09}). 

\section{Local stress equilibration algorithm}

\label{sec-local_equilibration}

For the sake of the efficient computation of the stress reconstruction, we localize the problem using a partition of unity. The commonly
used partition of unity with respect to the set $\cV_h$ of all vertices of $\cT_h$,
\begin{equation}
  1 \equiv \sum_{z \in \cV_h} \tilde{\phi}_z \mbox{ on } \Omega \: ,
  \label{eq:partition_of_unity_1}
\end{equation}
consists of continuous piecewise linear functions $\tilde{\phi}_z$. In this case, the support of $\tilde{\phi}_z$ is restricted to
\begin{equation}
  \tilde{\omega}_z := \bigcup \{ T \in \cT_h : z \mbox{ is a vertex of } T \} \: .
  \label{eq:vertex_patch}
\end{equation}
In analogy to the stress equilibration procedure described in \cite{BerKobMolSta:19} for the linear elasticity case, we modify this classical partition
of unity in order to exclude patches formed by vertices $z \in \Gamma_N$, where the local problems may possess to few degrees of freedom to
be solvable. To this end, let $\cV_h^\prime = \{ z \in \cV_h : z \notin \Gamma_N \}$ denote the subset of vertices which are not located on a side
(edge/face) of $\Gamma_N$. The modified partition of unity is defined by
\begin{equation}
  1 \equiv \sum_{z \in \cV_h^\prime} \phi_z \mbox{ on } \Omega \: .
  \label{eq:partition_of_unity}
\end{equation}
For $z \in \cV_h^\prime$ not connected by an edge to $\Gamma_N$ the function $\phi_z$ is equal to $\tilde{\phi}_z$. Otherwise,
the function $\phi_z$ has to be modified in order to account for unity at the connected vertices on $\Gamma_N$. For each
$z_N \in \Gamma_N$ one vertex $z_I \notin \Gamma_N$ connected by an edge with $z_N$ is chosen and $\tilde{\phi}_{z_I}$ is
extended by the value $1$ along the edge from $z_I$ to $z_N$ to obtain the modified function $\phi_{z_I}$. The support of
$\phi_z$ is denoted by
\begin{equation}
  \omega_z := \bigcup \{ T \in \cT_h : \phi_z = 1 \mbox{ for at least one vertex } z \mbox{ of } T \} \: .
  \label{eq:vertex_patch_prime}
\end{equation}
For the partition of unity (\ref{eq:partition_of_unity}) to hold, we require the triangulation $\cT_h$ to be such that each vertex on
$\Gamma_N$ is connected to an interior edge. For the localized equilibration algorithm, we will also need the local subspaces
\begin{equation}
  \bPi_{h,z}^{\Delta} = \{ \bq_h \in \bPi_h^{\Delta} : \bq_h \cdot \bn = \bzero \mbox{ on } \partial \omega_z \backslash \partial \Omega \: , \:
  \bq_h \equiv \bzero \mbox{ on } \Omega \backslash \overline{\omega}_z \}
 \label{eq:local_subspace}
\end{equation}
for all $z \in \cV_h^\prime$. Moreover, we need to work with the local sets of sides
$\cS_{h,z} := \{ S \in \cS_h : S \subset \overline{\omega}_z \}$ and the restrictions $\bZ_{h,z}$ and $\bX_{h,z}$ to $\omega_z$ of the test spaces
$\bZ_h$ and $\bX_h$, respectively. The conditions in (\ref{eq:equilibration_conditions}) can be restated for a sum of patch-wise contributions
\begin{equation}
  \bP_h^\Delta = \sum_{z \in \cV_h^\prime} \bP_{h,z}^\Delta \: ,
  \label{eq:patch_decomposition}
\end{equation}
where, for each $z \in \cV_h^\prime$, $\bP_{h,z}^\Delta \in  \bPi_{h,z}^{\Delta}$ is computed such that
$\Vert \bP_{h,z}^\Delta \Vert_{\omega_z}^2 $ is minimized subject to the following constraints:
\begin{equation}
  \begin{split}
    ( \div \: \bP_{h,z}^\Delta , \bz_{h,z} )_{\omega_z,h}
    & = - ( ( \bff + \div \: \widehat{\bP}_h (\bu_h) ) \phi_z , \bz_{h,z}  )_{\omega_z,h}
    \mbox{ for all } \bz_{h,z} \in \bZ_{h,z} \: , \\
    \langle \llbracket \bP_{h,z}^\Delta \cdot \bn \rrbracket_S , \bzeta \rangle_S
    & = - \langle \llbracket \widehat{\bP}_h (\bu_h) \cdot \bn \rrbracket_S \, \phi_z , \bzeta \rangle_S
    \hspace{0.85cm} \mbox{ for all } \bzeta \in P_k (S)^d \: , \: S \in \cS_{h,z} \: , \\
    ( \bP_{h,z}^\Delta \bF (\bu_h)^T , \bJ (\bgamma_{h,z}) )_{\omega_z} &
    = - ( \widehat{\bP}_h (\bu_h) \bF (\bu_h)^T \phi_z , \bJ (\bgamma_{h,z}) )_{\omega_z}
    \hspace{0.4cm} \mbox{ for all } \bgamma_{h,z} \in \bX_{h,z} \: .
  \end{split}
  \label{eq:equilibration_conditions_local}
\end{equation}
The minimization in (\ref{eq:equilibration_conditions_local}) is necessary since solutions to (\ref{eq:equilibration_conditions_local}) are not expected
to be unique, in general, similarly to the linear elasticity case treated in our earlier work \cite{BerKobMolSta:19}.
At this point, we may introduce the local orthogonal projections $\cP_{h,z}^k : L^2 (\omega_z) \rightarrow \bZ_{h,z}$ and
$\cP_{h,S}^k : L^2 (S) \rightarrow P_k (S)^d$ which means that the first two conditions in (\ref{eq:equilibration_conditions_local}) can be written
shortly as
\begin{align*}
  \div \: \bP_{h,z}^\Delta & = - \cP_{h,z}^k ((\bff + \div \: \widehat{\bP} (\bu_h)) \phi_z) \: , \\
  \llbracket \bP_{h,z}^\Delta \cdot \bn \rrbracket_S & = - \cP_{h,S}^k (\llbracket \widehat{\bP}_h (\bu_h) \cdot \bn \rrbracket_S \phi_z) \: .
\end{align*}
For each $z \in \cV_h^\prime$, (\ref{eq:equilibration_conditions_local}) constitutes a low-dimensional quadratic minimization problem with linear
constraints for which standard methods are available for the efficient solution. Note that it is not guaranteed at this point that
(\ref{eq:equilibration_conditions_local}) has a solution at all. In fact, it does not, in general, as will become clear from the results of the next section.
This is the reason why we will modify the test space in Section \ref{sec-remedy} in order to have well-posed local patch problems.

To get an idea about the structure of the system (\ref{eq:equilibration_conditions_local}) and as a motivation for the result in the next section, we
consider its underlying continuous problem. On the continuous level, the system (\ref{eq:equilibration_conditions_local}) constitutes the stress-based
dual formulation of the variational problem (\ref{eq:variational_condition}) restricted to $\omega_z$. With a suitable subspace
$\bV_z \subset H_{\Gamma_D \cap \partial \omega_z}^1 (\omega_z)^d$ this means that $\bz \in \bV_z$ is sought such that
\begin{equation}
  ( \bP (\bz) , \bnabla \bv )_{\omega_z} = ( \bff , \bv )_{\omega_z} + \langle \bg , \bv \rangle_{\partial \omega \cap \Gamma_N}
  \mbox{ for all } \bv \in \bV_z
  \label{eq:local_continuous_problem}
\end{equation}
holds. On vertex patches with $\Gamma_D \cap \partial \omega_z = \emptyset$, there is a non-trivial subspace
$\bV_z^\circ \subset \bV_z$ of test functions such that $( \bP (\bz) , \bnabla \bv ) = 0$ for $\bv \in \bV_z^\circ$. Obviously, all constants are
contained in $\bV_z^\circ$. Moreover, since
\[
  ( \bP (\bz) , \nabla \bv )_{\omega_z} = ( \bP (\bz) \bF (\bz)^T , \bnabla \bv \bF (\bz)^{-1} )_{\omega_z}
\]
holds and since $\bP (\bz) \bF (\bz)^T$ is a symmetric matrix, also all $\bv$ with $\bnabla \bv \bF (\bz)^{-1}$ being skew-symmetric will be contained
in $\bV_z^\circ$. In two dimensions, we arrive at
\begin{equation}
  \mbox{span} \{ \begin{pmatrix} 1 \\ 0 \end{pmatrix} , \begin{pmatrix} 0 \\ 1 \end{pmatrix} , \begin{pmatrix} x_2 + z_2 \\ - (x_1 + z_1) \end{pmatrix} \}
  \label{eq:2D_RM}
\end{equation}
being contained in $\bV^\circ$, and for $d = 3$ this is true for
\begin{equation}
  \mbox{span} \{ \begin{pmatrix} 1 \\ 0 \\ 0 \end{pmatrix} , \begin{pmatrix} 0 \\ 1 \\ 0 \end{pmatrix} , \begin{pmatrix} 0 \\ 0 \\ 1 \end{pmatrix} ,
  \begin{pmatrix} 0 \\ x_3 + z_3 \\ - (x_2 + z_2) \end{pmatrix} , \begin{pmatrix} - (x_3 + z_3) \\ 0 \\ x_1 + z_1 \end{pmatrix} ,
  \begin{pmatrix} x_2 + z_2 \\ - (x_1 + z_1) \\ 0 \end{pmatrix} \} \: .
  \label{eq:3D_RM}
\end{equation}
These are exactly the rigid body modes associated with the current configuration deformed by $\bvarphi (\bx) = \bx + \bz$ which we would like to
denote by $\bR\bM (\bz)$ from now on. From the above derivation, it should not be surprising that the corresponding rigid body mode spaces
$\bR\bM (\bu_h)$ will appear in the investigation of the well-posedness of the discrete local problems (\ref{eq:equilibration_conditions_local}) in
the following section.

\section{Solvability of the local problems on vertex patches}

\label{sec-solvability}

We turn our attention to the solvability of the local minimization problem subject to the constraints (\ref{eq:equilibration_conditions_local}). To this
end, we need to guarantee that for every right hand side, a function $\bP_{h,z}^\Delta \in \bPi_{h,z}^\Delta$ exists such that the
constraints (\ref{eq:equilibration_conditions_local}) are satisfied. The left-hand side in (\ref{eq:equilibration_conditions_local}) defines a linear
operator $\cL_{h,z} : \Pi_{h,z}^\Delta \rightarrow \bZ_{h,z}^\prime \times \bS_{h,z}^\prime \times \bX_{h,z}^\prime$, where
$\bS_{h,z} = \{ \bzeta \in P_k (S)^d : S \in \cS_{h,z} \}$ denotes the trace space on the interior sides and $( \: \cdot \: )^\prime$ stands for the dual
space. The subspace $\bR_{h,z}^\perp \subseteq \bZ_{h,z} \times \bS_{h,z} \times \bX_{h,z}$ orthogonal to the range of $\cL_{h,z}$, i.e., the
null space of its adjoint $\cL_{h,z}^\ast$, is obviously of interest for the solvability since the linear functionals on the right-hand side in
(\ref{eq:equilibration_conditions_local}) need to vanish on $\bR_{h,z}^\perp$.
This subspace can be characterized as follows.

\begin{proposition}
  The subspace
  \begin{equation}
    \begin{split}
      \bR_{h,z}^\perp = \{ & ( \bz_{h,z} , \bs_{h,z} , \bgamma_{h,z} ) \in \bZ_{h,z} \times \bS_{h,z} \times \bX_{h,z} : \\
      & ( \div \: \bP_{h,z}^\Delta , \bz_{h,z} )_{\omega_z,h}
      - \sum_{S \in \cS_{z,h}} \langle \llbracket \bP_{h,z}^\Delta \cdot \bn \rrbracket_S , \bs_{h,z} \rangle_S
      + ( \bP_{h,z}^\Delta , \bJ (\bgamma_{h,z}) \bF (\bu_h) )_{\omega_z} = 0
      \mbox{ for all } \bP_{h,z}^\Delta \in \bPi_{h,z}^\Delta \} \: ,
    \end{split}
    \label{eq:adjoint_null_space}
  \end{equation}
  i.e., the null space of the adjoint operator $\cL_{h,z}^\ast$ associated with the constraints (\ref{eq:equilibration_conditions_local}), can be
  characterized as follows:
  \begin{equation}
    \begin{split}
      \bR_{h,z}^\perp & = \{ ( \cP_{h,z}^k \brho , \{ \cP_{h,S}^k \brho \}_{S \in \cS_{h,z}} , \btheta ) :
      ( \brho , \btheta ) \in \bR\bM (\bu_h) \times \R^{d (d-1)/2} \mbox{ such that } \bJ (\btheta) \bF (\bu_h) = \bnabla \brho \}
      \mbox{ if } | \partial \omega_z \cap \Gamma_D | = 0 \: , \\
      \bR_{h,z}^\perp & = \{ ( \bzero , \bzero , \bzero ) \} \mbox{ if } | \partial \omega_z \cap \Gamma_D | > 0 \: .
    \end{split}
    \label{eq:adjoint_null_space_characterization}
  \end{equation}
  Here, $| \: \cdot \: |$ denotes the $d-1$-dimensional measure of boundary curves or surfaces, respectively.
  \label{prop-adjoint_null_space_characterization}
\end{proposition}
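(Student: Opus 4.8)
The plan is as follows. Since $\bR_{h,z}^\perp=\ker\cL_{h,z}^\ast$, I would first rewrite the defining relation in (\ref{eq:adjoint_null_space}) by an element-wise integration by parts of the term $(\div\bP_{h,z}^\Delta,\bz_{h,z})_{\omega_z,h}$; using that $\bP_{h,z}^\Delta$ vanishes outside $\overline\omega_z$ and that $\bP_{h,z}^\Delta\cdot\bn=\bzero$ on $\partial\omega_z\setminus\partial\Omega$, this should bring it into the form
\begin{equation}
  ( \bP_{h,z}^\Delta , \bJ(\bgamma_{h,z})\bF(\bu_h) - \bnabla\bz_{h,z} )_{\omega_z,h}
  + \sum_{T\subset\omega_z} \Big( \sum_{\substack{S\subset\partial T\\ S\in\cS_{h,z},\; S\nsubseteq\Gamma_D}} \langle \bP_{h,z}^\Delta\cdot\bn_T ,\, \bz_{h,z}|_T - \bs_{h,z}|_S \rangle_S
  + \sum_{\substack{S\subset\partial T\\ S\subset\Gamma_D}} \langle \bP_{h,z}^\Delta\cdot\bn_T ,\, \bz_{h,z}|_T \rangle_S \Big) = 0
  \label{eq:recast}
\end{equation}
for all $\bP_{h,z}^\Delta\in\bPi_{h,z}^\Delta$, where $\bn_T$ is the outward normal of $T$ (interior sides occur twice with opposite normals, reproducing $\llbracket\cdot\rrbracket_S$; sides on $\partial\omega_z\setminus\partial\Omega$ drop out; sides on $\Gamma_N$ occur once; the last inner sum is present only if $|\partial\omega_z\cap\Gamma_D|>0$). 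Everything then reduces to analysing (\ref{eq:recast}).

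For the inclusion ``$\supseteq$'' in (\ref{eq:adjoint_null_space_characterization}) (case $|\partial\omega_z\cap\Gamma_D|=0$) I would take $(\brho,\btheta)\in\bR\bM(\bu_h)\times\R^{d(d-1)/2}$ with $\bJ(\btheta)\bF(\bu_h)=\bnabla\brho$ --- so that $\brho=\bJ(\btheta)(\bx+\bu_h)+\ba$ for some $\ba\in\R^d$ --- and observe that, because $\div\bP_{h,z}^\Delta|_T\in P_k(T)^d$ and $\bP_{h,z}^\Delta\cdot\bn|_S\in P_k(S)^d$ already lie in the ranges of $\cP_{h,z}^k$ and $\cP_{h,S}^k$, the functional of (\ref{eq:adjoint_null_space}) evaluated at $(\cP_{h,z}^k\brho,\{\cP_{h,S}^k\brho\}_{S\in\cS_{h,z}},\btheta)$ coincides with its value for the \emph{globally continuous} data $\brho,\{\brho|_S\}_S,\btheta$. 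Running the same integration by parts on that expression cancels all face terms (now $\bz_{h,z}|_T-\bs_{h,z}|_S=\brho-\brho|_S=0$ and there are no $\Gamma_D$-sides) and leaves $(\bP_{h,z}^\Delta,\bJ(\btheta)\bF(\bu_h)-\bnabla\brho)_{\omega_z}=0$.

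For ``$\subseteq$'' I would feed specific test functions into (\ref{eq:recast}). Element-supported $\bP_{h,z}^\Delta$ with zero normal trace on $\partial T$ kill the face terms and give $(\bP_{h,z}^\Delta,\bJ(\bgamma_{h,z})\bF(\bu_h)-\bnabla\bz_{h,z})_T=0$ on each $T$; $\bP_{h,z}^\Delta$ carrying a free normal trace in $P_k(S)^d$ on a single side $S$ (and none elsewhere) relate $\bs_{h,z}|_S$ to the restrictions of $\bz_{h,z}$ and $\bgamma_{h,z}$ on the adjacent element(s). Invoking that for $k\ge1$ the divergence and the normal traces on the local row-wise $RT_k$ space are surjective (onto the mean-value-free degree-$k$ vector polynomials, resp.\ onto $P_k(S)^d$), I expect these conditions to force $\bz_{h,z}$ to be the element-wise $L^2$-projection of one globally continuous piecewise polynomial $\brho$, $\bs_{h,z}=\{\cP_{h,S}^k\brho\}_S$, and $\bgamma_{h,z}$ to be element-wise constant; continuity of $\brho$ across interfaces and invertibility of $\bF(\bu_h)$ --- a jump $\bdelta$ in that constant would make $\bJ(\bdelta)\bF(\bu_h)$ vanish on the tangent space of the interface, impossible unless $\bdelta=\bzero$ --- then upgrade it to a single $\btheta$ with $\bnabla\brho=\bJ(\btheta)\bF(\bu_h)$, i.e.\ $\brho\in\bR\bM(\bu_h)$. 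When $|\partial\omega_z\cap\Gamma_D|>0$ the surviving $\Gamma_D$-terms in (\ref{eq:recast}) force $\bz_{h,z}$ (hence $\brho$) to vanish on a face, and a current-configuration rigid body mode vanishing on a $(d-1)$-dimensional face is identically zero (again by invertibility of $\bF(\bu_h)$, which maps the tangent space of that face into $\ker\bJ(\btheta)$), so $\bR_{h,z}^\perp=\{(\bzero,\bzero,\bzero)\}$.

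The step I expect to be the main obstacle is the one just indicated: because $\bF(\bu_h)$ is not constant (unlike the linear-elasticity situation in \cite{BerKobMolSta:19}), $\bJ(\bgamma_{h,z})\bF(\bu_h)$ has strictly higher polynomial degree than $\bnabla\bz_{h,z}$, so no pointwise identity $\bJ(\bgamma_{h,z})\bF(\bu_h)=\bnabla\bz_{h,z}$ is available --- one only controls $L^2$-projections onto the local $RT_k$ data. Arguing that the rotation multiplier $\bgamma_{h,z}$ nonetheless collapses to a single constant, and that the projected relation integrates up to a genuine rigid body mode of the deformed configuration, is the delicate part, where the precise structure of $RT_k$ ($k\ge1$) and of the partition of unity from Section~\ref{sec-local_equilibration} have to be used carefully.
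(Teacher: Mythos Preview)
Your framework---the element-wise integration by parts yielding a relation of the form (\ref{eq:recast}), the projection trick for the inclusion ``$\supseteq$'', and the handling of the Dirichlet case via the extra $\Gamma_D$ face terms---is sound and matches what the paper does in its third step. The gap is precisely the one you flag in your last paragraph, and your proposed mechanism does not close it. Testing with element-supported $RT_k$ bubbles only makes $\bJ(\bgamma_{h,z})\bF(\bu_h)-\bnabla\bz_{h,z}$ orthogonal on each $T$ to a strict subspace of $P_k(T)^{d\times d}$; since $\bJ(\bgamma_{h,z})\bF(\bu_h)$ has degree $2k$ while $\bnabla\bz_{h,z}$ has degree $k-1$, that orthogonality does not yield element-wise constancy of $\bgamma_{h,z}$, and without it your interface argument (``a jump $\bdelta$ would force $\bJ(\bdelta)\bF(\bu_h)$ to vanish on the tangent space'') never starts---nor would it apply directly anyway, because you only have projected rather than pointwise relations.

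The paper resolves this step by a genuinely different device. Instead of element-by-element tests it restricts to the \emph{patch-global} $H(\div)$-conforming, divergence-free subspace with $\bP_{h,z}^\Delta\cdot\bn=\bzero$ on all of $\partial\omega_z$, which by the discrete de~Rham sequence equals $\{\bcurl\,\bpsi_{h,z}:\bpsi_{h,z}\in N_k(\cT_h|_{\omega_z})^d,\ \bn\times\bpsi_{h,z}=\bzero\ \text{on}\ \partial\omega_z\}$. Writing $\bJ(\bgamma_{h,z})\bF(\bu_h)=\sum_i\gamma_i\,\bnabla\brho_i$ (this is the identity (\ref{eq:gradient_rotations})) and integrating $(\bcurl\,\bpsi_{h,z},\sum_i\gamma_i\bnabla\brho_i)_{\omega_z}$ by parts leaves, since $\bcurl\,\bnabla\brho_i=\bzero$, the condition $(\bpsi_{h,z},\sum_i\nabla\gamma_i\times\bnabla\brho_i)_{\omega_z}=0$ for all such N\'ed\'elec fields. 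A degrees-of-freedom count in $N_k$ (edge tests for $k=1$, then the additional face and volume moments for higher $k$) then forces $\nabla\gamma_i\equiv\bzero$ on the whole patch in one stroke---no element-wise constancy followed by an interface argument is needed. Once $\bgamma_{h,z}=\btheta$ is constant, $\bJ(\btheta)\bF(\bu_h)=\bnabla\brho$ is a genuine gradient, and your divergence-range argument identifies $\bz_{h,z}=\cP_{h,z}^k\brho$ as in the paper's second step.
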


\begin{proof}
  The proof is carried out for $d = 3$; the two-dimensional case is much easier and can be derived from the three-dimensional one in the usual way
  by setting $u_3 \equiv 0$ and all other functions to be independent of $x_3$ (with appropriate modifications of operators such as
  $\div$, $\bnabla$, $\bcurl$, etc.).
  
  {\em 1st Step.} We start by showing that the component $\bgamma_{h,z}$ of $\bR_{h,z}^\perp$ in (\ref{eq:adjoint_null_space}) needs to satisfy
  \begin{equation}
    \bJ (\bgamma_{h,z}) \bF (\bu_h) = \nabla (\gamma_1 \brho_1 + \gamma_2 \brho_2 + \gamma_3 \brho_3) \: .
    \label{eq:rotation_and_gradient}
  \end{equation}
  Let us restrict ourselves to the $H (\div)$-conforming subspace of $\bPi_{h,z}^\Delta$, i.e., with the property that
  $\llbracket \bP_{h,z}^\Delta \cdot \bn \rrbracket_S = \bzero$ for all $S \in \cS_{h,z}$.
  Then, the condition in (\ref{eq:adjoint_null_space}) for the definition of $\bR_{h,z}^\perp$ turns into
  \begin{equation}
    ( \div \: \bP_{h,z}^\Delta , \bz_{h,z} )_{\omega_z}
    + ( \bP_{h,z}^\Delta , \bJ (\bgamma_{h,z}) \bF (\bu_h) )_{\omega_z} = 0 \: .
    \label{eq:condition_conforming}
  \end{equation}
  By definition, we can write
  \begin{equation}
    \begin{split}
      \bJ (\bgamma_{h,z}) & \bF (\bu_h) =
      \begin{pmatrix} 0 & \gamma_3 & -\gamma_2 \\ -\gamma_3 & 0 & \gamma_1 \\ \gamma_2 & -\gamma_1 & 0 \end{pmatrix}
      \bnabla \begin{pmatrix} x_1 + u_1 \\ x_2 + u_2 \\ x_3 + u_3 \end{pmatrix} \\
      & = \gamma_1 \bnabla \begin{pmatrix} 0 \\ x_3 + u_3 \\ -(x_2 + u_2) \end{pmatrix}
      + \gamma_2 \bnabla \begin{pmatrix} -(x_3 + u_3) \\ 0 \\ x_1 + u_1 \end{pmatrix}
      + \gamma_3 \bnabla \begin{pmatrix} x_2 + u_2 \\ -(x_1 + u_1) \\ 0 \end{pmatrix} \\
      & = \gamma_1 \bnabla \brho_1 + \gamma_2 \bnabla \brho_2 + \gamma_3 \bnabla \brho_3 \: .
    \end{split}
    \label{eq:gradient_rotations}
  \end{equation}
  We may restrict ourselves further to divergence-free $\bP_{h,z}^\Delta$ with $\bP_{h,z} \cdot \bn = \bzero$ on the entire boundary
  $\partial \omega_z$. These stress approximations can be written as
  $\bP_{h,z}^\Delta = \bcurl \: \bpsi_{h,z}$ with $\bpsi_{h,z}$ in the N\'{e}d\'{e}lec space $N_k (\cT_h)^d$
  (cf. \cite[Corollary 2.3.2]{BofBreFor:13}) with boundary conditions $\bn \times \bpsi_{h,z} = \bzero$ on $\partial \omega_z$.
  Inserting this into (\ref{eq:condition_conforming}) and integrating by parts leads to
  \begin{equation}
    \begin{split}
      0 & = ( \bP_{h,z}^\Delta , \bJ (\bgamma_{h,z}) \bF (\bu_h) )_{\omega_z}
            = ( \bP_{h,z}^\Delta , \gamma_1 \bnabla \brho_1 + \gamma_2 \bnabla \brho_2 + \gamma_3 \bnabla \brho_3 )_{\omega_z} \\
         &
         = ( \bcurl \: \bpsi_{h,z} , \gamma_1 \bnabla \brho_1 + \gamma_2 \bnabla \brho_2 + \gamma_3 \bnabla \brho_3 )_{\omega_z} \\
         &=( \bpsi_{h,z} , \bcurl (\gamma_1 \bnabla \brho_1+\gamma_2 \bnabla \brho_2 + \gamma_3 \bnabla \brho_3) )_{\omega_z} \\
         & = ( \bpsi_{h,z} , \nabla \gamma_1 \times \bnabla \brho_1 + \nabla \gamma_2 \times \bnabla \brho_2
         + \nabla \gamma_3 \times \bnabla \brho_3 )_{\omega_z} \: ,
    \end{split}
    \label{eq:condition_conforming_divergence_free}
  \end{equation}
  where we used the fact that $\bcurl \: \bnabla \brho_1 = \bcurl \: \bnabla \brho_2 = \bcurl \: \bnabla \brho_3 = \bzero$. It can be
  shown that (\ref{eq:condition_conforming_divergence_free}) can only hold for all $\bpsi_{h,z}$ if 
  $\nabla \gamma_1 = \nabla \gamma_2 = \nabla \gamma_3 = \bzero$ in the following way:
  In the lowest-order case $k = 1$, one may insert as test functions $\bpsi_{h,z}$ with tangential component $\bpsi_{h,z} \cdot \bt_E \equiv \be_i$
  for $i = 1 , 2 , 3$ on an interior edge $E \subset \omega_z \backslash \partial \omega_z$ and $\bpsi_{h,z} \cdot \bt_{E^\prime} \equiv \bzero$ on
  all the other interior edges $E^\prime$. If $(\# E)_z$ denotes the number of interior edges in $\omega_z$, this gives $3 (\# E)_z$ linearly
  independent conditions for the $3 (\# E)_z$ constant values $(\nabla \gamma_i) \cdot \bt_E$ for $i = 1 , 2 , 3$ on all interior edges $E$.
  Therefore, (\ref{eq:condition_conforming_divergence_free}) implies that the tangential derivatives of $\gamma_1$, $\gamma_2$ and $\gamma_3$
  vanish along all interior edges $E$ which implies that $\gamma_1$, $\gamma_2$ and $\gamma_3$ are themselves constant.
  For the higher-order case, each increase of the polynomial degree from $k-1$ to $k$ gives additional degrees of freedom to be controlled:
  For each of the three components, one per edge (including edges on $\partial \omega_z$), additionally $k-2$ per face (including faces on
  $\partial \omega_z$) and additionally $(k-2) (k-3) / 2$ per tetraeder. This is more than compensated for by the additional test functions available
  in the N\'{e}d\'{e}lec space $N_k (\cT_h)$, see \cite[Proposition 2.3.5]{BofBreFor:13} so that $\gamma_1$, $\gamma_2$ and $\gamma_3$ are
  still forced by (\ref{eq:condition_conforming_divergence_free}) to remain constant. Finally, the fact that $\gamma_1$, $\gamma_2$ and
  $\gamma_3$ need to be constant implies that (\ref{eq:gradient_rotations}) can be written as (\ref{eq:rotation_and_gradient}).

  {\em 2nd Step.} Inserting (\ref{eq:rotation_and_gradient}) into (\ref{eq:condition_conforming}) and, restricting ourselves to
  $\bP_{h,z} \in \bPi_{h,z}^\Delta$ with, in addition to $\llbracket \bP_{h,z} \cdot \bn \rrbracket_S = \bzero$ for all $S \in \cS_{h,z}$,
  $\bP_{h,z} \cdot \bn = \bzero$ on all of $\partial \omega_z$ (which is automatically satisfied if $\partial \omega_z \cap \Gamma_D = \emptyset$),
  integration by parts leads to
  \begin{equation}
    ( \div \: \bP_{h,z}^\Delta , \bz_{h,z} )_{\omega_z}
    = ( \div \: \bP_{h,z}^\Delta , \gamma_1 \brho_1 + \gamma_2 \brho_2 + \gamma_3 \brho_3 + \ba)_{\omega_z}
    \label{eq:condition_conforming_div}
  \end{equation}
  with an arbitrary constant $\ba \in \R^3$. The range of the divergence operator satisfies
  \begin{align*}
    \{ \div \: \bP_{h,z}^\Delta : \bP_{h,z} \in \bPi_{h,z}^\Delta \mbox{ with } \llbracket \bP_{h,z}^\Delta \cdot \bn \rrbracket_S = 0 &
    \mbox{ for all } S \in \cS_h \mbox{ and } \bP_{h,z} \cdot \bn = \bzero \mbox{ on } \partial \omega_z \} \\
    & = \{ \bz_{h,z} \in \bZ_{h,z} : ( \bz_{h,z} , \be_i )_{\omega_z} = 0 \mbox{ for } i = 1 , \ldots , d \} =: \bZ_{h,z}^0 \: .
  \end{align*}
  If $\cP_{h,z}^{k,0}$ denotes the $L^2 (\omega_z)$-orthogonal projection to $\bZ_{h,z}^0$, then (\ref{eq:condition_conforming_div}) implies that
  $\bz_{h,z} = \cP_{h,z}^{k,0} (\gamma_1 \brho_1 + \gamma_2 \brho_2 + \gamma_3 \brho_3) + \ba$ which means that
  $\bz_{h,z} = \cP_{h,z}^k (\gamma_1 \brho_1 + \gamma_2 \brho_2 + \gamma_3 \brho_3 + \widetilde{\ba})$ with some $\widetilde{\ba} \in \R^3$.
  Since all rigid body modes $\brho \in \bR\bM (\bu_h)$ which can be written as
  $\brho = \gamma_1 \brho_1 + \gamma_2 \brho_2 + \gamma_3 \brho_3 + \widetilde{\ba}$, we have the corresponding representation of
  $\bz_{h,z}$ in (\ref{eq:adjoint_null_space_characterization}).
  
  {\em 3rd Step.} Now we need to consider the two cases in (\ref{eq:adjoint_null_space_characterization}) separately. If
  $| \partial \omega_z \cap \Gamma_D | = 0$, we have indeed that every pair $( \brho , \btheta ) \in \bR\bM (\bu_h) \times \R^{d (d-1)/2}$ with
  $\bJ (\btheta) \bF (\bu_h) = \bnabla \brho$ gives rise to a solution of (\ref{eq:adjoint_null_space}) in the form
  $( \cP_{h,z}^k \brho , \{ \cP_{h,S}^k \brho \}_{S \in \cS_{h,z}} , \btheta )$. This is due to the fact that, for all
  $\bP_{h,z}^\Delta \in \bPi_{h,z}^\Delta$,
  \begin{align*}
    ( \div \: \bP_{h,z}^\Delta , \cP_{h,z}^k \brho )_{\omega_z,h}
    & - \sum_{S \in \cS_{z,h}} \langle \llbracket \bP_{h,z}^\Delta \cdot \bn \rrbracket_S , \cP_{h,S}^k \brho \rangle_S
    + ( \bP_{h,z}^\Delta , \bJ (\btheta) \bF (\bu_h) )_{\omega_z} \\
    & = ( \div \: \bP_{h,z}^\Delta , \brho )_{\omega_z,h}
    - \sum_{S \in \cS_{z,h}} \langle \llbracket \bP_{h,z}^\Delta \cdot \bn \rrbracket_S , \brho \rangle_S
    + ( \bP_{h,z}^\Delta , \bJ (\btheta) \bF (\bu_h) )_{\omega_z} \\
    & = - ( \bP_{h,z}^\Delta , \bnabla \brho )_{\omega_z} + ( \bP_{h,z}^\Delta , \bJ (\btheta) \bF (\bu_h) )_{\omega_z} = 0
  \end{align*}
  holds. On the other hand, in the case $| \partial \omega_z \cap \Gamma_D | > 0$,
  \begin{align*}
    0 & = ( \bP_{h,z}^\Delta , - \bnabla \brho + \bJ (\btheta) \bF (\bu_h) )_{\omega_z} \\
    & = ( \div \: \bP_{h,z}^\Delta , \brho )_{\omega_z,h}
    - \sum_{S \in \cS_{z,h}} \langle \llbracket \bP_{h,z}^\Delta \cdot \bn \rrbracket_S , \brho \rangle_S
    - \sum_{S \subset \Gamma_D} \langle \bP_{h,z}^\Delta \cdot \bn , \brho \rangle_S
    + ( \bP_{h,z}^\Delta , \bJ (\btheta) \bF (\bu_h) )_{\omega_z} \\
    & = ( \div \: \bP_{h,z}^\Delta , \cP_{h,z}^k \brho )_{\omega_z,h}
    - \sum_{S \in \cS_{z,h}} \langle \llbracket \bP_{h,z}^\Delta \cdot \bn \rrbracket_S , \cP_{h,S}^k \brho \rangle_S
    - \sum_{S \subset \Gamma_D} \langle \bP_{h,z}^\Delta \cdot \bn , \cP_{h,S}^k \brho \rangle_S
    + ( \bP_{h,z}^\Delta , \bJ (\btheta) \bF (\bu_h) )_{\omega_z}
  \end{align*}
  holds for all $\bP_{h,z}^\Delta \in \bPi_{h,z}^\Delta$. Choosing $\bP_{h,z}^\Delta \in \bPi_{h,z}^\Delta$ appropriately, this implies that
  $\cP_{h,S}^k \brho = \bzero$ must hold on all $S \subset \Gamma_D$. Since there is at least one side $S \subset \Gamma_D$ and due to the
  special structure of the space $\bR\bM (\bu_h)$, the only possibility is $\brho = \bzero$.
  
\end{proof}

\begin{remark}
  In the linear elasticity case, Proposition 1 turns into the corresponding result from our earlier work \cite{BerKobMolSta:19}, where
  $( \bz_{h,z} , \bs_{h,z} , \bgamma_{h,z} ) = ( \brho , \{ \left. \brho \right|_S \}_{S \in \cS_h} , \btheta )$ for
  $( \brho , \btheta ) \in \bR\bM (\bu_h) \times \R^{d (d-1)/2}$ with $\bJ (\theta) = \bnabla \brho$.
\end{remark}

Basic linear algebra tells us that the right-hand side of the linear system (\ref{eq:equilibration_conditions_local}) is in the range of the operator
$\cL_{h,z}$ if it is orthogonal to $\bR_{h,z}^\perp$, the null space of $\cL_{h,z}^\ast$.
Using Proposition \ref{prop-adjoint_null_space_characterization} this is obviously the case for patches $\omega_z$ with
$| \partial \omega_z \cap \Gamma_D | > 0$ since $\bR_{h,z}^\perp$ only contains zero in that case. In the case of interior
patches $\omega_z$ in the sense that $| \partial \omega_z \cap \Gamma_D | = 0$, we may insert the representation of $\bR_{h,z}^\perp$
into the right-hand side of (\ref{eq:equilibration_conditions_local}). This leads to
\begin{equation}
  \begin{split}
    ( & (\bff + \div \: \widehat{\bP}_h (\bu_h)) \phi_z , \bz_{h,z} )_{\omega_z,h}
    - \sum_{S \in \cS_{z,h}} \langle \llbracket \widehat{\bP}_h (\bu_h) \cdot \bn \rrbracket_S \phi_z , \bs_{h,z} \rangle_S
    + ( \widehat{\bP}_h (\bu_h) \phi_z , \bJ (\bgamma_{h,z}) \bF (\bu_h) )_{\omega_z} \\
    & = ( (\bff + \div \: \widehat{\bP}_h (\bu_h)) \phi_z , \cP_{h,z}^k \brho )_{\omega_z,h}
    - \sum_{S \in \cS_{z,h}} \langle \llbracket \widehat{\bP}_h (\bu_h) \cdot \bn \rrbracket_S \phi_z , \cP_{h,S}^k \brho \rangle_S
    + ( \widehat{\bP}_h (\bu_h) \phi_z , \bnabla \brho )_{\omega_z} \\
    & = ( \bff , \phi_z \cP_{h,z}^k \brho )_{\omega_z} - ( \widehat{\bP}_h (\bu_h) , \bnabla (\phi_z \cP_{h,z}^k \brho) )_{\omega_z}
    + \sum_{S \in \cS_{z,h}} \langle \llbracket \widehat{\bP}_h (\bu_h) \cdot \bn \rrbracket_S , \phi_z (\cP_{h,z}^k - \cP_{h,S}^k) \brho \rangle_S
    + ( \widehat{\bP}_h (\bu_h) , \phi_z \bnabla \brho )_{\omega_z}
  \end{split}
  \label{eq:consistency_right_hand_side}
\end{equation}
for all $( \bz_{h,z} , \bs_{h,z} , \bgamma_{h,z} ) \in \bR_{h,z}^\perp$. The first two terms vanish since
\[
  ( \widehat{\bP}_h (\bu_h)) , \bnabla (\phi_z \cP_{h,z}^{k,0} \brho) )_{\omega_z}
  = ( \bP_h (\bu_h)) , \bnabla (\phi_z \cP_{h,z}^k \brho) )_{\omega_z} = ( \bff , \phi_z \cP_{h,z}^k \brho )_{\omega_z}
\]
holds due to the definition of $\widehat{\bP}_h (\bu_h)$ as projection onto piecewise polynomials of degree $k$ and the Galerkin condition
(\ref{eq:Galerkin_condition}) which holds for piecewise polynomials of degree $k+1$ (of which $\phi_z \cP_{h,z}^k \brho$ is a fine specimen).
Therefore, for each $( \bz_{h,z} , \bs_{h,z} , \bgamma_{h,z} ) \in \bR_{h,z}^\perp$, we end up with the expression
\begin{equation}
  \begin{split}
    ( (\bff + \div \: \widehat{\bP}_h (\bu_h)) \phi_z , \bz_{h,z} )_{\omega_z,h}
    & - \sum_{S \in \cS_{z,h}} \langle \llbracket \widehat{\bP}_h (\bu_h) \cdot \bn \rrbracket_S \phi_z , \bs_{h,z} \rangle_S
    + ( \widehat{\bP}_h (\bu_h) \phi_z , \bJ (\bgamma_{h,z}) \bF (\bu_h) )_{\omega_z} \\
    & = \sum_{S \in \cS_{z,h}} \langle \llbracket \widehat{\bP}_h (\bu_h) \cdot \bn \rrbracket_S , \phi_z (\cP_{h,z}^k - \cP_{h,S}^k) \brho \rangle_S
    + ( \widehat{\bP}_h (\bu_h) , \phi_z \bnabla \brho )_{\omega_z}
  \end{split}
  \label{eq:consistency_right_hand_side_linear}
\end{equation}
for the inconsistency of the right-hand side in (\ref{eq:equilibration_conditions_local}). This motivates the choice of a modified test space such that
the term in (\ref{eq:consistency_right_hand_side_linear}) actually vanishes.

\section{A Modification Leading to Equilibrated Stresses}

\label{sec-remedy}

Our construction so far is based on using the simple component-wise $L^2 (\Omega)$-projection $\widehat{\bP}_h (\bu_h) = \cP_h^k \bP (\bu_h)$
onto the space of piecewise polynomials of degree $k$.
Due to the incompatibility of the right-hand sides in the local equilibration systems (\ref{eq:equilibration_conditions_local}) on interior vertex patches,
these problems do not possess a solution, in general. It is certainly possible to solve these systems in a least-squares sense but that would mean
that we do not get equilibrated stresses from this procedure. In particular, this means that momentum
conservation would not be satisfied locally on each element. We will therefore take up our findings from Section \ref{sec-solvability} and derive a
modification of $\widehat{\bP}_h (\bu_h)$ such that the right-hand side in (\ref{eq:equilibration_conditions_local}) becomes compatible.
In view of (\ref{eq:consistency_right_hand_side}), it is reasonable to choose the test space $\bZ_{h,z}$ as well as the test functions
$\bzeta$ on the sides $S \in \cS_{z,h}$ in such a way that they contain the rigid body modes $\brho \in \bR\bM (\bu_h)$ of the deformed configuration.
With this choice, $\cP_{h,z}^k \brho = \cP_{h,S}^k \brho = \brho$ and the sum over the sides in (\ref{eq:consistency_right_hand_side}) vanishes.
A straightforward way to do this consists in building the test spaces on the basis of piecewise polynomials in the deformed variables
$\bvarphi (\bx) = \bx + \bu_h (\bx)$ instead of $\bx$. This choice also makes sense in view of the fact that the quantities
$\div \: \bP_h^R$ and $\llbracket \bP_h^R \cdot \bn \rrbracket$ which are actually tested are mappings from the reference configuration to
(forces in) the current configuration. In fact, this modification of the test spaces is only needed for the subspace of polynomials of
degree 1 and one can use a hierarchical construction where the enrichment to polynomials of higher degree is again based on the reference
coordinates from $\bx$.

Let us assume, for the moment, that also the test space in the Galerkin formulation (\ref{eq:Galerkin_condition}) would contain the rigid body modes of
the deformed configuration. Then, the compatibility condition in (\ref{eq:consistency_right_hand_side}) would turn into
\begin{equation}
  \begin{split}
    ( \bff , \phi_z \brho )_{\omega_z} & - ( \widehat{\bP}_h (\bu_h) , \bnabla (\phi_z \brho) )_{\omega_z}
    + ( \widehat{\bP}_h (\bu_h) , \phi_z \bnabla \brho )_{\omega_z} \\
    & = ( \bff , \phi_z \brho )_{\omega_z} - ( \widehat{\bP}_h (\bu_h) , \brho \: \nabla \phi_z )_{\omega_z} \\
    & = ( \bff , \phi_z \brho )_{\omega_z} - ( \bP (\bu_h) , \brho \: \nabla \phi_z )_{\omega_z} \\
    & = ( \bff , \phi_z \brho )_{\omega_z} - ( \bP (\bu_h) , \brho \: \nabla \phi_z + \phi_z \bnabla \brho )_{\omega_z} \\
    & = ( \bff , \phi_z \brho )_{\omega_z} - ( \bP (\bu_h) , \bnabla (\phi_z \brho) )_{\omega_z} \: ,
  \end{split}
  \label{eq:consistency_miracle}
\end{equation}
if $\widehat{\bP}_h (\bu_h)$ is defined as the $L^2 (\omega_z)$-orthogonal projection with respect to piecewise polynomials in the deformed
coordinates. The compatibility term in (\ref{eq:consistency_miracle}) does indeed miraculously cancel out, if $\phi_z \brho$ is assumed
to be in the test space of the Galerkin formulation (\ref{eq:Galerkin_condition}). Using such a test space is not as far-fetched as one might think.
It would ensure invariance with respect to the rigid body modes in the deformed configuration which is not fulfilled for the use of standard
polynomial-based finite elements. However, such an approach is expected to be too complicated for practical use and therefore we need to
come up with a suitable choice for $\widehat{\bP} (\bu_h)$ leading to a compatible right-hand side in the absence of this ideal situation.

We restrict our construction to the lowest-order case $k = 1$ and consider the following slightly more general formulation of
(\ref{eq:equilibration_conditions_local}):
\begin{equation}
  \begin{split}
    ( \div \: \bP_{h,z}^\Delta , \bz_{h,z} )_{\omega_z,h}
    & = - ( ( \widehat{\bff} + \div \: \widehat{\bP}_h (\bu_h) ) \phi_z , \bz_{h,z}  )_{\omega_z,h}
    \mbox{ for all } \bz_{h,z} \in \bZ_{h,z} \: , \\
    \langle \llbracket \bP_{h,z}^\Delta \cdot \bn \rrbracket_S , \bzeta \rangle_S
    & = - \langle \llbracket \widehat{\bP}_h (\bu_h) \cdot \bn \rrbracket_S \, \phi_z , \bzeta \rangle_S
    \hspace{0.85cm} \mbox{ for all } \bzeta \in \tilde{P}_1 (S)^d \: , \: S \in \cS_{h,z} \: , \\
    ( \bP_{h,z}^\Delta \bF (\bu_h)^T , \bJ (\bgamma_{h,z}) )_{\omega_z} &
    = - ( \widehat{\bP}_h (\bu_h) \bF (\bu_h)^T \phi_z , \bJ (\bgamma_{h,z}) )_{\omega_z}
    \hspace{0.4cm} \mbox{ for all } \bgamma_{h,z} \in \bX_{h,z}
  \end{split}
  \label{eq:equilibration_conditions_local_general}
\end{equation}
where we are still free to construct $\widehat{\bff}$ in an appropriate way from $\bff$. As test space in the first equation of
(\ref{eq:equilibration_conditions_local_general}),
\begin{equation}
  \bZ_{h,z} = \{ \left. \bz_{h,z} \right|_T = \bq \circ \bvarphi \mbox{ with } \bq \in P_1 (\varphi(T))^d \}
  \label{modified_test_space_elements}
\end{equation}
could be chosen, where $\varphi$ again denotes the mapping from the reference to the (approximated) deformed configuration given by
$\bvarphi (\bx) = \bx + \bu_h (\bx)$. The test space for the second equation in (\ref{eq:equilibration_conditions_local_general}) would then be given
component-wise by transformed polynomials of the form
\begin{equation}
  \tilde{P}_1 (S)^d = \{ \bq \circ \bvarphi : \bq \in P_1 (\varphi(S)) \} \: .
  \label{modified_test_space_sides}
\end{equation}
However, in order to make sure that the rigid body modes associated with the deformed configuration $\bR\bM (\bu_h)$ are contained in the test
space, it is sufficient to replace the original undeformed rigid body modes $\bR\bM (\bzero)$ in the piecewise polynomial test space by $\bR\bM (\bu_h)$.
The test space $\bX_{h,z}$ for the third equation in (\ref{eq:equilibration_conditions_local_general}), the weak symmetry condition, may remain
unchanged since only constant rotations appear in the compatibility conditions resulting from Proposition
\ref{prop-adjoint_null_space_characterization}. For these spaces, the compatibility condition
\begin{equation}
  ( (\widehat{\bff} + \div \: \widehat{\bP}_h (\bu_h)) \phi_z , \bz_{h,z} )_{\omega_z,h}
  - \sum_{S \in \cS_{z,h}} \langle \llbracket \widehat{\bP}_h (\bu_h) \cdot \bn \rrbracket_S \phi_z , \bs_{h,z} \rangle_S
  + ( \widehat{\bP}_h (\bu_h) \phi_z , \bJ (\bgamma_{h,z}) \bF (\bu_h) )_{\omega_z} = 0
  \label{eq:compatibility_condition}
\end{equation}
for all $( \bz_{h,z} , \bs_{h,z} , \bgamma_{h,z} ) \in \bR_{h,z}^\perp$ is therefore equivalent to
\begin{equation}
  \begin{split}
    0 = ( (\widehat{\bff} + \div \: \widehat{\bP}_h (\bu_h)) \phi_z , \brho )_{\omega_z,h}
    & - \sum_{S \in \cS_{z,h}} \langle \llbracket \widehat{\bP}_h (\bu_h) \cdot \bn \rrbracket_S \phi_z , \brho \rangle_S
    + ( \widehat{\bP}_h (\bu_h) \phi_z , \bnabla \brho )_{\omega_z} \\
    & = ( \widehat{\bff} , \phi_z \brho )_{\omega_z} - ( \widehat{\bP}_h (\bu_h) , \bnabla (\phi_z \brho) )_{\omega_z}
    + ( \widehat{\bP}_h (\bu_h) , \phi_z \bnabla \brho )_{\omega_z} \\
    & = ( \widehat{\bff} , \phi_z \brho )_{\omega_z} - ( \widehat{\bP}_h (\bu_h) , \brho \: \nabla \phi_z )_{\omega_z}
  \end{split}
  \label{eq:compatibility_condition_RM}
\end{equation}
due to (\ref{eq:consistency_right_hand_side}). Making use of the Galerkin condition (\ref{eq:Galerkin_condition}), the compatibility condition
(\ref{eq:compatibility_condition_RM}) is certainly fulfilled if, on all elements $T \in \cT_h$,
\begin{equation}
  \begin{split}
    ( \widehat{\bff} , \brho \phi_z )_T & = ( \bff , (\brho \circ \bvarphi^{-1}) \phi_z )_T
    \hspace{1.8cm} \mbox{ for all } \brho \in \bR\bM (\bu_h) \: , \: z \in \cV_h^\prime \cap T \: , \\
    ( \widehat{\bP}_h (\bu_h) , \brho \: \nabla \phi_z) )_T & = ( \bP (\bu_h) , \bnabla ((\brho \circ \bvarphi^{-1}) \phi_z) )_T
    \hspace{0.65cm} \mbox{ for all } \brho \in \bR\bM (\bu_h) \: , \: z \in \cV_h^\prime \cap T
  \end{split}
  \label{eq:compatibility_condition_polynomial}
\end{equation}
holds. Note that $\brho \circ \bvarphi^{-1} \in \bR\bM (\bzero)$ holds with the original undeformed rigid body modes.
The first relation in (\ref{eq:compatibility_condition_polynomial}) constitutes $d (d+1)^2 / 2$ conditions (9 in two dimensions, 24 in
three dimensions) and can thus be fulfilled by choosing $\widehat{\bff} \in P_2 (T)^d$. The spare degrees of freedoms may be used to
minimize $\| \widehat{\bff} - \bff \|_T$ among all $\widehat{\bff} \in P_2 (T)^d$ satisfying the constraints. The second relation in (\ref{eq:compatibility_condition_polynomial}) constitutes $d (d+1)^2 / 2 - d$ conditions (7 in two dimensions, 21 in
three dimensions) since the constant rigid body modes gives zero on both sides. These conditions can be fulfilled by
$\widehat{\bP}_h \in P_1 (T)^{d \times d}$. Again, a reasonable elimination of the spare degrees of freedoms consists in minimizing
$\| \widehat{\bP}_h - \bP_h (\bu_h) \|_T$ among all $\widehat{\bP}_h \in P_1 (T)^{d \times d}$ satisfying the constraints.

We end this section with a remark on the inf-sup stability of the system (\ref{eq:equilibration_conditions_local_general}) which follows along the
same lines as in \cite{BofBreFor:09} for the linear elasticity formulation. It is easy to see that the null space associated with the first and second equation
in (\ref{eq:equilibration_conditions_local_general})
\begin{equation}
  \bPi_{h,z}^{\Delta,0} = \{ \bQ_{h,z} \in \bPi_{h,z}^\Delta : ( \div \: \bQ_{h,z} , \bz_{h,z} )_{\omega_z,h} = 0 \mbox{ for all }
  \bz_{h,z} \in \bZ_{h,z} \: , \: \langle \llbracket \bQ_{h,z}^\Delta \cdot \bn \rrbracket_S , \bzeta \rangle_S = 0 \mbox{ for all } \bzeta \in \tilde{P}_1 (S)^d
  \: , \: S \subset \omega_z \}
\end{equation}
remains unchanged by the modification of the test spaces, i.e.,
\begin{equation}
  \begin{split}
    \bPi_{h,z}^{\Delta,0} & = \{ \bQ_{h,z} \in \bPi_{h,z}^\Delta : \div \: \bQ_{h,z}^\Delta = 0 \mbox{ for all } T \subset \omega_z \: , \:
    \langle \llbracket \bQ_{h,z}^\Delta \cdot \bn \rrbracket_S = 0 \mbox{ for all } S \subset \omega_z \} \\
    & = \{ \bcurl \: \bxi_{h,z} : \bxi_{h,z} \in \bXi_{h,z} \} \: ,
  \end{split}
\end{equation}
where $\bXi_{h,z}$ is the subspace of N\'{e}d\'{e}lec elements (of the first kind) on $\omega_z$ with vanishing tangential trace on $\partial \omega_z$.
All that is left to show for the inf-sup stability of (\ref{eq:equilibration_conditions_local_general}) is therefore that
\begin{equation}
  \beta \| \bgamma_{h,z} \|_{\omega_z} \leq \sup_{\bxi_{h,z} \in \bXi_{h,z}}
  \frac{( (\bcurl \: \bxi_{h,z}) \bF (\bu_h)^T , \bJ (\bgamma_{h,z}) )_{\omega_z}}{\| \bcurl \: \bxi_{h,z} \|_{\omega_z}} \mbox{ for all } \bgamma_{h,z} \in \bX_{h,z}
  \label{eq:inf_sup_subspace}
\end{equation}
holds with a constant $\beta > 0$. If we define $\bxi_{h,z}^\varphi : \bvarphi (\omega_z) \rightarrow \R^{d \times d}$ by
$\bxi_{h,z}^\varphi \circ \bvarphi = \bxi_{h,z} \bF (\bu_h)^{-1}$, then, according to the transformation rule of the curl operator (cf. \cite[Sect. 2.1.3]{BofBreFor:13}),
$\bxi_{h,z}^\varphi \in H (\bcurl^\varphi , \bvarphi (\omega_z))$ and
\begin{equation}
  (\bcurl^\varphi \bxi_{h,z}^\varphi) \circ \bvarphi = \frac{1}{\det \bF (\bu_h)} (\bcurl \: \bxi_{h,z}) \bF (\bu_h)^T \: ,
\end{equation}
where $\bcurl^\varphi$ denotes the curl with respect to the mapped coordinates. The inf-sup condition (\ref{eq:inf_sup_subspace}) is therefore equivalent to the
existence of a constant $\beta > 0$ such that
\begin{equation}
  \beta \| \bgamma_{h,z} \|_{\bvarphi (\omega_z)} \leq \sup_{\bxi_{h,z}^\varphi \in \bXi_{h,z}^\varphi}
  \frac{( \bcurl^\varphi \: \bxi_{h,z}^\varphi , \bJ (\bgamma_{h,z}) )_{\bvarphi_z (\omega_z)}}{\| \bcurl^\varphi \: \bxi_{h,z}^\varphi \|_{\bvarphi (\omega_z)}}
  \mbox{ for all } \bgamma_{h,z} \in \bX_{h,z}
  \label{eq:inf_sup_subspace_mapped}
\end{equation}
with the mapped N\'ed\'elec space $\bXi_{h,z}^\varphi$ holds. This is exactly the inf-sup condition for the original spaces from \cite{BofBreFor:13} in mapped
coordinates using parametric Raviart-Thomas elements \cite{BerSta:16} for the stress approximation.

The combination of the inf-sup stability of the system ((\ref{eq:equilibration_conditions_local_general}) with the fact that our right-hand side
is guaranteed to be in its range ensures that there is a correction $\bP_{z,h}^\Delta$ in the broken Raviart-Thomas space leading to an
equilibrated stress $\bP_h^R$ in the end.

\section{Improved Approximation of Surface Traction Forces}

\label{sec-surface_forces}

One of the motivations for the construction of equilibrated stresses is that this leads to approximations of the surface traction forces with an
ensured convergence rate. The divergence theorem implies that
\begin{equation}
  \langle ( \bP - \bP_h^R ) \cdot \bn , \bv \rangle_{\partial \Omega} = ( \div (\bP - \bP_h^R) , \bv ) + ( \bP - \bP_h^R , \bnabla \bv )
  \label{eq:divergence_theorem}
\end{equation}
holds for all $\bv \in H^1 (\Omega)^d$. If we assume that $(\bP - \bP_h^R) \cdot \bn = \bzero$ on $\Gamma_N$ and $\div (\bP - \bP_h^R) = \bzero$ in
$\Omega$ holds (for example, since $\bf$ and $\bg$ are piecewise constant), then (\ref{eq:divergence_theorem}) turns into
\begin{equation}
  \langle ( \bP - \bP_h^R ) \cdot \bn , \bv \rangle_{\Gamma_D} = ( \bP - \bP_h^R , \bnabla \bv ) \: .
  \label{eq:divergence_theorem_simplified}
\end{equation}
This implies that
\begin{equation}
  \| ( \bP - \bP_h^R ) \cdot \bn \|_{-1/2,\Gamma_D}
  = \sup_{\bv \in H^1 (\Omega)} \frac{\langle ( \bP - \bP_h^R ) \cdot \bn , \bv \rangle_{\Gamma_D}}{\| \bnabla \bv \|}
  = \sup_{\bv \in H^1 (\Omega)} \frac{( \bP - \bP_h^R , \bnabla \bv )}{\| \bnabla \bv \|}
  \leq \| \bP - \bP_h^R \|
  \label{eq:minus_half_estimate}
\end{equation}
is satisfied which means that the approximation of the surface traction forces, measured in the $H^{-1/2} (\Gamma)$ norm, converges at least as fast
as the stress approximation in the $L^2 (\Omega)$ norm. Since, by construction, $\| \bP_h^R - \bP_h (\bu_h) \|$ is expected to be locally an
$O (h^2)$-approximation, the term on the right-hand side in (\ref{eq:minus_half_estimate}) will converge at the same order as $\| \bP - \bP (\bu_h) \|$,
in general.

If we insert $\bv \in \bR\bM (\bu_h)$, the rigid body modes in the deformed configuration, into the numerators in the middle of (\ref{eq:minus_half_estimate}),
then
\begin{equation}
  \langle ( \bP - \bP_h^R ) \cdot \bn , \bv \rangle_{\Gamma_D} = ( \bP - \bP_h^R , \bnabla \bv )
  = ( (\bP - \bP_h^R) \bF (\bu_h)^T , (\bnabla \bv) \bF (\bu_h)^{-1} ) = 0
\end{equation}
since $\bnabla \bv \bF (\bu_h)^{-1} = \bJ (\btheta)$, which constitutes a global version of (\ref{eq:rotation_and_gradient}), and $( \bP - \bP_h^R ) \bF (\bu_h)^T$
is weakly symmetric in the sense of (\ref{eq:equilibration_conditions_local_general}).

\section{Computational Results}

\label{sec-computational}

We tested our stress equilibration procedure for the well-known Cook's membrane example with a quadrilateral geometry. The corners of
the domain are located at $( 0,0 )$, $( 0.48,0.44 )$, $( 0.48,0.6 )$ and $( 0,0.44 )$ and the boundary is divided into the left line segment
$\Gamma_D$ and the lower, right, and upper segments which together form $\Gamma_N$. Figure \ref{fig-mesh3} shows this geometry and the
triangulation $\cT_3$ which is the result of three levels of uniform refinement. The surface traction force on the right boundary segment is
$\bg = ( 0 , \gamma )^T$ with different
values $\gamma > 0$, while the upper and lower boundary parts are traction-free; the volume forces $\bff$ are set to zero. In order to test the
robustness of our approach with respect to the incompressibility, we set $\mu = 1$ and $\lambda = \infty$ in the Neo-Hookean law
(\ref{eq:Neo_Hooke_pressure}) and use the displacement-pressure approximation from (\ref{eq:Galerkin_condition_pressure}) as starting point for
our stress equilibration procedure. All our computations are for the lowest-order case $k = 1$ using the Taylor-Hood combination of finite element
spaces.

\begin{figure}
\centering
 \includegraphics[scale=0.35]{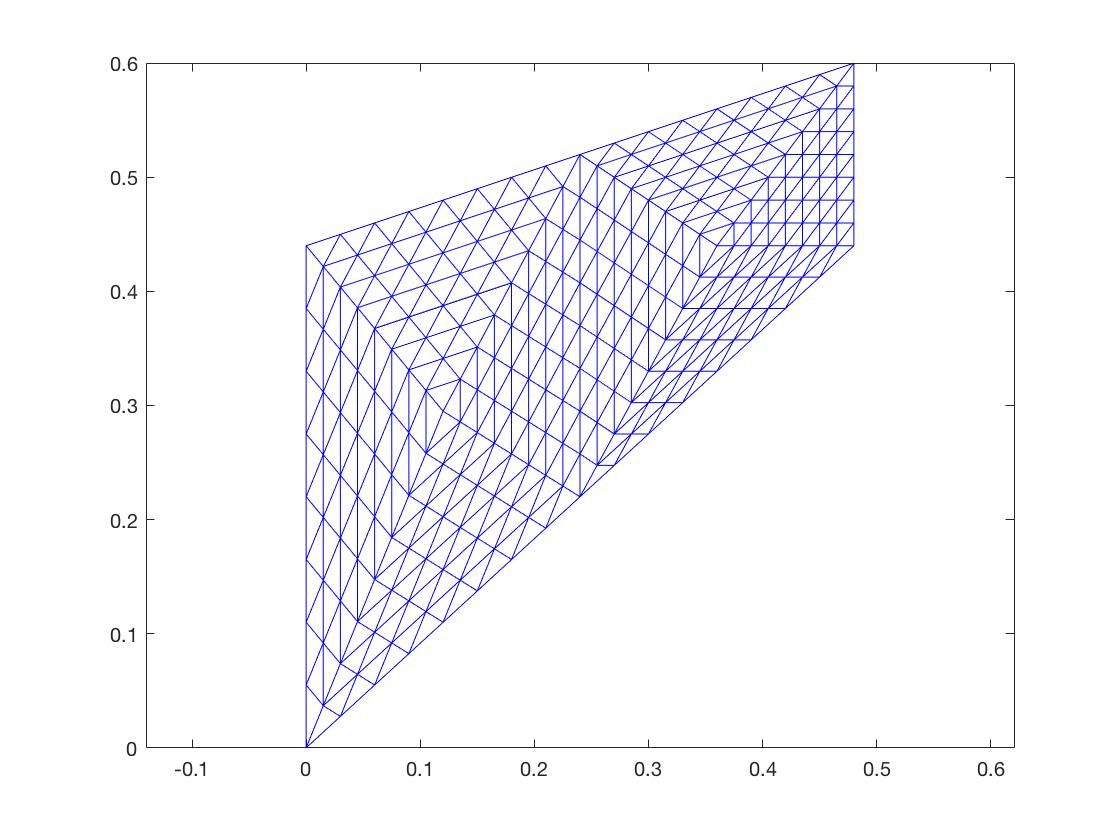}
  \caption{Cook's membrane and triangulation $\cT_3$ after three uniform refinement steps}
  \label{fig-mesh3}
\end{figure}

Of particular interest is the distribution of the traction forces on the left boundary including the singularity with infinite stress components at the upper
left corner. The distribution of the normal traction force along the left boundary is shown in Figure \ref{fig-normal_force}, for the load value $\gamma = 0.2$,
on the triangulation $\cT_5$ which results from two further uniform refinements of $\cT_3$. The left graph shows the
values for $\bn \cdot (\widehat{\bP}_h (\bu_h) \cdot \bn)$, corresponding to the projected Piola-Kirchhoff stress from the Galerkin approximation.
The right graph shows $\bn \cdot (\bP_h^R \cdot \bn)$ for the reconstructed stress. Both pictures represent piecewise affine traction force distributions
along the vertical axis. At a first glance, one may get the impression that the left distribution ``looks better than'' the right one. However, at closer inspection
it becomes obvious that the reconstructed stress in the right graph is better able to represent the singular behavior at the upper end. More importantly,
the surface forces obtained from the reconstructed Piola-Kirchhoff stress $\bP_h^R$ recover the correct resultant force
\begin{equation}
  \cI_{D,n} (\bP) := \int_{\Gamma_D} \: \bn \cdot (\bP \cdot \bn) \: ds = 0 \: .
\end{equation}
This is a consequence of the divergence theorem which implies
\begin{equation}
  \int_{\Gamma_D} \: \bP \cdot \bn \: ds = \int_\Omega \: \div \: \bP \: dx - \int_{\Gamma_N}  \: \bP \cdot \bn \: ds
  = \begin{pmatrix} 0 \\ - 0.16 \: \gamma \end{pmatrix} \: .
\end{equation}
The approximations $\cI_{D,n} (\widehat{\bP}_h (\bu_h))$ and $\cI_{D,n} (\bP_h^R)$ are shown for the two triangulations $\cT_3$ and $\cT_5$ and
several values of $\gamma$ in Tables \ref{tab-resultant_normal_force_PK} and \ref{tab-resultant_normal_force_R}. Apparently, the values produced by
$\widehat{\bP}_h$ are not exact while those coming from the stress reconstruction differ from zero only in the range of machine precision.

\begin{figure}
\centering
  \includegraphics[scale=0.22]{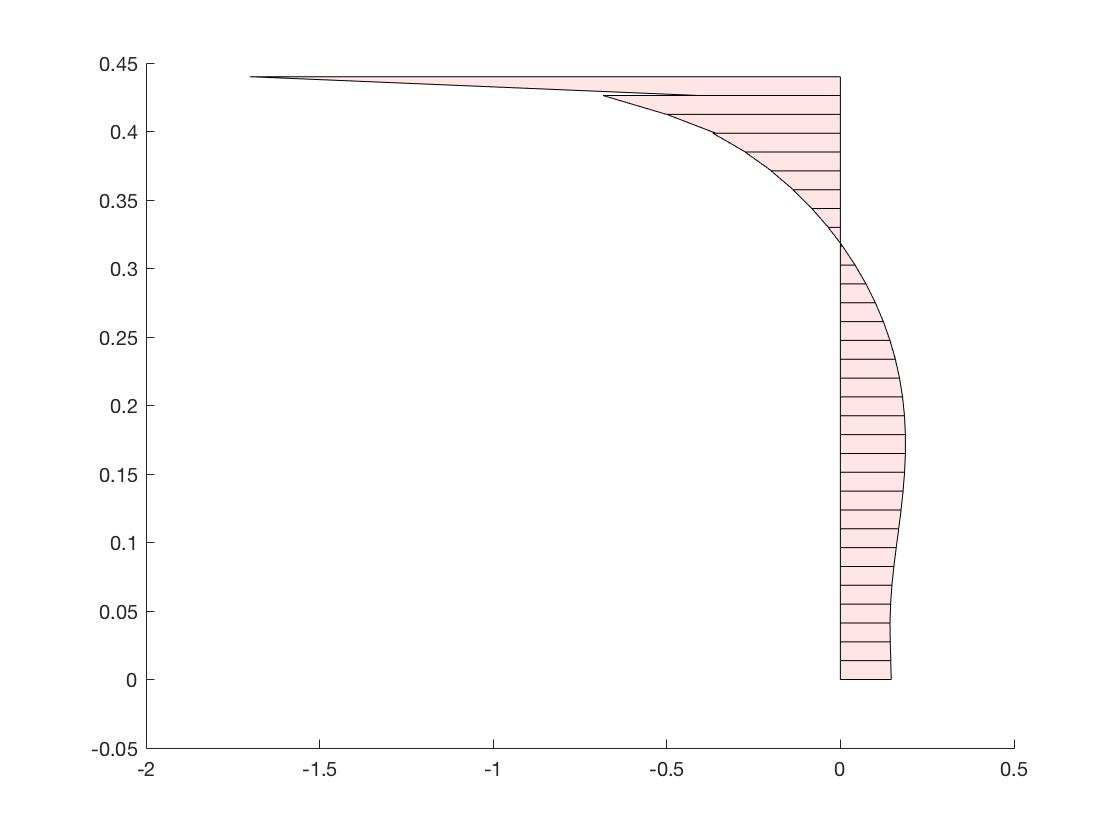}\includegraphics[scale=0.22]{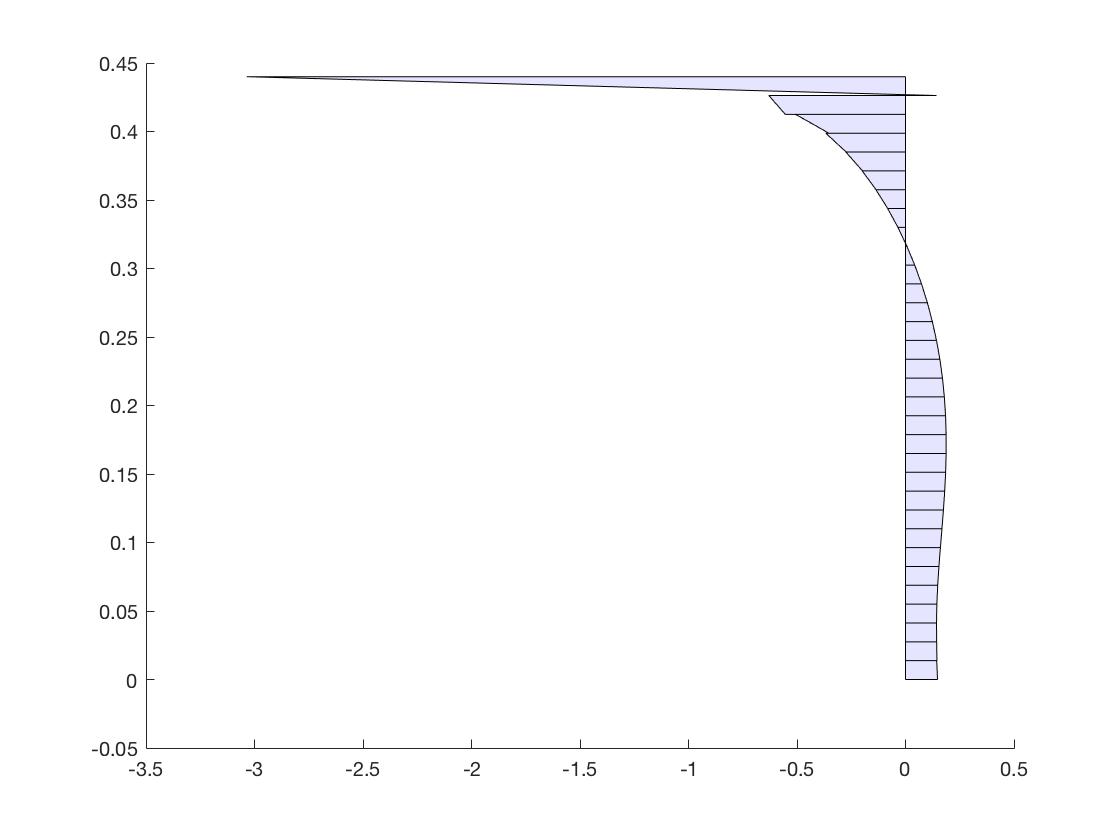}
  \caption{Normal traction forces $\bn \cdot (\widehat{\bP} (\bu_h, p_h) \cdot \bn)$ (left) and $\bn \cdot (\bP_h^R \cdot \bn)$ on $\Gamma_D$
  for $\cT_5$ ($\gamma = 0.2$)}
  \label{fig-normal_force}
\end{figure}




\begin{center}
\begin{table}
  \centering
  \begin{tabular}{|l|ccc|}
    $\cI_{D,n} (\widehat{\bP}_h (\bu_h))$ & $\gamma = 0.05$ & $\gamma = 0.2$ & $\gamma = 0.5$ \\ \hline
    $\cT_3$ & $1.69 \cdot 10^{-3}$ & $8.29 \cdot 10^{-3}$ & $2.31 \cdot 10^{-2}$ \\
    $\cT_5$ & $9.59 \cdot 10^{-4}$ & $5.40 \cdot 10^{-3}$ & $2.59 \cdot 10^{-3}$ \\ \hline
  \end{tabular}
  \caption{Approximated resultant normal traction force for $\widehat{\bP}_h (\bu_h)$}
  \label{tab-resultant_normal_force_PK}
\end{table}
\end{center}

\begin{center}
\begin{table}
  \centering
  \begin{tabular}{|l|ccc|}
    $\cI_{D,n} (\bP_h^R)$ & $\gamma = 0.05$ & $\gamma = 0.2$ & $\gamma = 0.5$ \\ \hline
    $\cT_3$ & $1.74 \cdot 10^{-11}$ & -$3.80 \cdot 10^{-9}$ & -$2.87 \cdot 10^{-10}$ \\
    $\cT_5$ & $8.11 \cdot 10^{-11}$ & $5.96 \cdot 10^{-10}$ & $2.92 \cdot 10^{-9}$ \\ \hline
  \end{tabular}
  \caption{Approximated resultant normal traction force for $\bP_h^R$}
  \label{tab-resultant_normal_force_R}
\end{table}
\end{center}
 



\begin{figure}
\centering
  \includegraphics[scale=0.35]{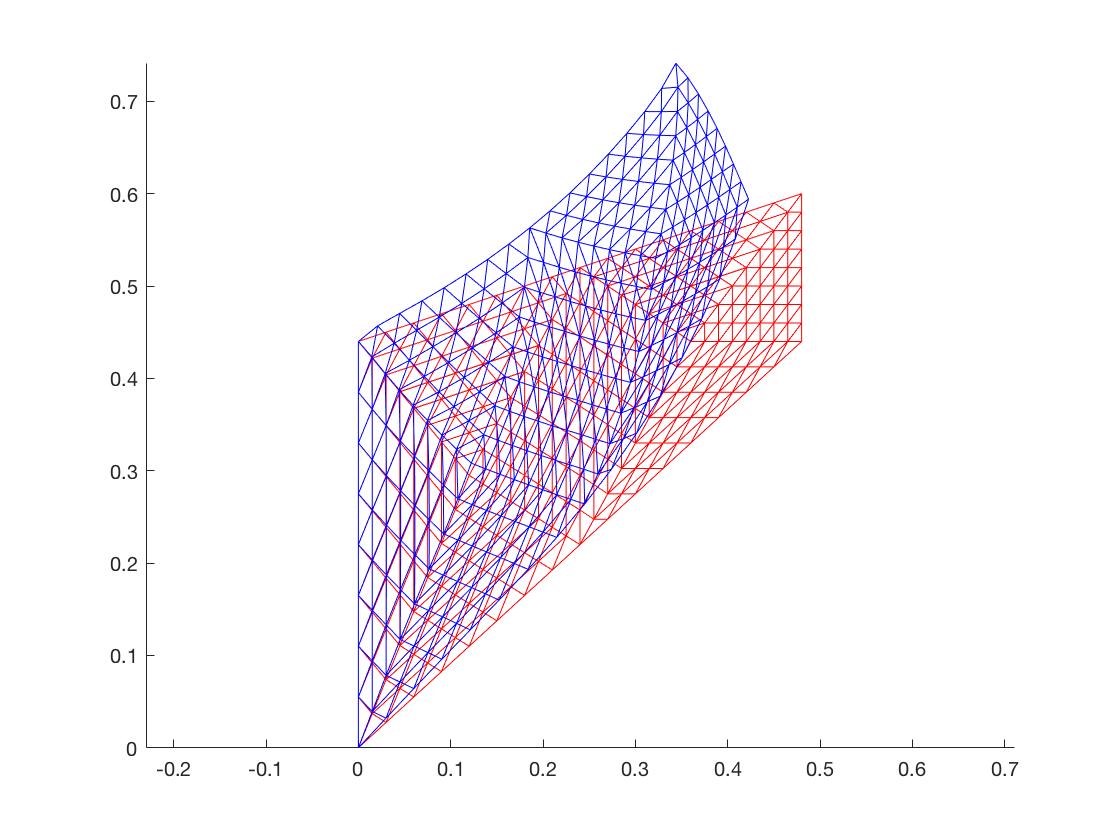}
  \caption{Reference and deformed configuration for $\gamma = 0.2$}
  \label{fig-reference_and_deformed}
\end{figure}



The reference and the deformed configuration are shown in Figure \ref{fig-reference_and_deformed} for $\gamma = 0.2$. The picture clearly
indicates that this example is well inside the geometrically nonlinear regime. Table \ref{tab-h_minus_half} compares the convergence of
$\| (\bP - \bP_h^R) \cdot \bn \|_{-1/2,\Gamma_D}$ versus $\| (\bP - \bP (\bu_h)) \cdot \bn \|_{-1/2,\Gamma_D}$ on a sequence of meshes. 
Since we do not know the exact values of $\bP \cdot \bn$ on $\Gamma_D$, we access the convergence behavior by the computation of
$\| (\bP_h^R - \bP_{2h}^R) \cdot \bn \|_{-1/2,\Gamma_D}$ and $\| (\bP (\bu_h) - \bP (\bu_{2h})) \cdot \bn \|_{-1/2,\Gamma_D}$, respectively.
The norm is evaluated approximately by
\begin{equation}
  \| s_h \|_{-1/2,\Gamma} = \sup_{v \in H^1 (\Omega)} \frac{\langle s_h , v \rangle_\Gamma}{\| v \|_{1/2,\Gamma}}
  \approx \sup_{v_h \in V_h^\ast} \frac{\langle s_h , v_h \rangle_\Gamma}{\| v_h \|_{1/2,\Gamma}} \: ,
  \label{eq:h_minus_half_evaluation}
\end{equation}
where $V_h$ denotes the space of continuous piecewise linear functions on $\cT_h$. The values in Table \ref{tab-h_minus_half} indicate that
the convergence for the equilibrated stresses is quite a bit faster than the $O (h^\alpha)$-behavior with $\alpha \approx 0.544$ expected from the
regularity of the problem. It can also be seen that the convergence rate is much higher than the one obtained for the original stresses.

\begin{center}
\begin{table}
  \centering
  \begin{tabular}{|l|ccc|}
    $\gamma = 0.2$ & $\cT_3$ & $\cT_4$ & $\cT_5$ \\ \hline
    $\| (\bP_{h}^R - \bP_{2h}^R) \cdot \bn \|_{-1/2,\Gamma_D}$ & $4.4075 \cdot 10^{-3}$ & $2.3382 \cdot 10^{-3}$ & $1.2470 \cdot 10^{-3}$ \\
    rate $\alpha$ & & 0.915 & 0.907 \\
    $\| (\bP (\bu_h) - \bP (\bu_{2h}) \cdot \bn \|_{-1/2,\Gamma_D}$ & $2.7801 \cdot 10^{-3}$ & $2.4555 \cdot 10^{-3}$ & $2.1781 \cdot 10^{-3}$ \\
    rate $\alpha$ & & 0.179 & 0.173 \\ \hline
  \end{tabular}
  \caption{Approximated resultant normal traction force for $\bP_h^R$}
  \label{tab-h_minus_half}
\end{table}
\end{center}






\section{Conclusions}

\label{sec-conclusions}

In this paper, a stress equilibration procedure for hyperelastic material models was proposed and investigated. It is necessarily based on a weakly
symmetric stress formulation and treats geometrically and materially nonlinear elasticity problems. Our main contribution is the identification of the
subspace of test functions perpendicular to the range of the equilibration system on vertex patches not attached to the Dirichlet boundary.
This result is then used to propose an appropriate projection for the Piola-Kirchhoff stress in order to get compatible patch problems. For the
moment, this stress equilibration procedure is used for its own sake, for example, in order to obtain better approximations of traction forces.
Our future goal will be to develop an a posteriori error estimator on the basis of stress equilibration for hyperelastic material models. Clearly,
this will only be possible under restrictive assumptions excluding all the known situations where uniqueness of the solution does not hold.




\bibliography{../../../../biblio/articles,../../../../biblio/books}

\providecommand{\url}[1]{\texttt{#1}}
\providecommand{\urlprefix}{}
\providecommand{\foreignlanguage}[2]{#2}
\providecommand{\Capitalize}[1]{\uppercase{#1}}
\providecommand{\capitalize}[1]{\expandafter\Capitalize#1}
\providecommand{\bibliographycite}[1]{\cite{#1}}
\providecommand{\bbland}{and}
\providecommand{\bblchap}{chap.}
\providecommand{\bblchapter}{chapter}
\providecommand{\bbletal}{et~al.}
\providecommand{\bbleditors}{editors}
\providecommand{\bbleds}{eds: }
\providecommand{\bbleditor}{editor}
\providecommand{\bbled}{ed.}
\providecommand{\bbledition}{edition}
\providecommand{\bbledn}{ed.}
\providecommand{\bbleidp}{page}
\providecommand{\bbleidpp}{pages}
\providecommand{\bblerratum}{erratum}
\providecommand{\bblin}{in}
\providecommand{\bblmthesis}{Master's thesis}
\providecommand{\bblno}{no.}
\providecommand{\bblnumber}{number}
\providecommand{\bblof}{of}
\providecommand{\bblpage}{page}
\providecommand{\bblpages}{pages}
\providecommand{\bblp}{p}
\providecommand{\bblphdthesis}{Ph.D. thesis}
\providecommand{\bblpp}{pp}
\providecommand{\bbltechrep}{}
\providecommand{\bbltechreport}{Technical Report}
\providecommand{\bblvolume}{volume}
\providecommand{\bblvol}{Vol.}
\providecommand{\bbljan}{January}
\providecommand{\bblfeb}{February}
\providecommand{\bblmar}{March}
\providecommand{\bblapr}{April}
\providecommand{\bblmay}{May}
\providecommand{\bbljun}{June}
\providecommand{\bbljul}{July}
\providecommand{\bblaug}{August}
\providecommand{\bblsep}{September}
\providecommand{\bbloct}{October}
\providecommand{\bblnov}{November}
\providecommand{\bbldec}{December}
\providecommand{\bblfirst}{First}
\providecommand{\bblfirsto}{1st}
\providecommand{\bblsecond}{Second}
\providecommand{\bblsecondo}{2nd}
\providecommand{\bblthird}{Third}
\providecommand{\bblthirdo}{3rd}
\providecommand{\bblfourth}{Fourth}
\providecommand{\bblfourtho}{4th}
\providecommand{\bblfifth}{Fifth}
\providecommand{\bblfiftho}{5th}
\providecommand{\bblst}{st}
\providecommand{\bblnd}{nd}
\providecommand{\bblrd}{rd}
\providecommand{\bblth}{th}
\begin{thebibliography}{10}

\bibitem{BerKobMolSta:19}
F.~Bertrand, B.~Kober, M.~Moldenhauer, G.~Starke, {\it Submitted for
  Publication} \textbf{2019}, arXiv: 1808.02655.

\bibitem{LubSchWei:14}
L.~Lubkoll, A.~Schiela, M.~Weiser, {\it SIAM J. Control Optim.} \textbf{2014},
  {\it 52}, 1403--1422.

\bibitem{MarHug:83}
J.~E. Marsden, T.~J.~R. Hughes, {\it Mathematical Foundations of Elasticity},
  Prentice Hall, Englewood Cliffs, \textbf{1983}.

\bibitem{Cia:88}
P.~G. Ciarlet, {\it Mathematical Elasticity Volume I: Three--Dimensional
  Elasticity}, North-Holland, Amsterdam, \textbf{1988}.

\bibitem{LeT:94}
P.~LeTallec, {\it Numerical Methods for Nonlinear Three-Dimensional
  Elasticity}, \textbf{1994}, Handb. {N}umer. {A}nal. {III}, P.G. Ciarlet and
  J. L. Lions eds., North-Holland, Amsterdam, pp. 465--662.

\bibitem{AurBeiLovReaTayWri:13}
F.~Auricchio, L.~{Beir{\~a}o da Veiga}, C.~Lovadina, A.~Reali, R.~Taylor,
  P.~Wriggers, {\it Comput. Mech.} \textbf{2013}, {\it 52}, 1153--1167.

\bibitem{CarDol:04}
C.~Carstensen, G.~Dolzmann, {\it Numer. Math.} \textbf{2004}, {\it 97}, 67--80.

\bibitem{MueStaSchSch:14}
B.~M{\"u}ller, G.~Starke, A.~Schwarz, J.~Schr{\"o}der, {\it SIAM J. Sci.
  Comput.} \textbf{2014}, {\it 36}, B795--B816.

\bibitem{BofBreFor:13}
D.~Boffi, F.~Brezzi, M.~Fortin, {\it Mixed Finite Element Methods and
  Applications}, Springer, Heidelberg, \textbf{2013}.

\bibitem{PraSyn:47}
W.~Prager, J.~L. Synge, {\it Quart. Appl. Math.} \textbf{1947}, {\it 5},
  241--269.

\bibitem{Bra:07}
D.~Braess, {\it Finite Elements: Theory, Fast Solvers, and Applications in
  Solid Mechanics} {\it \bblthirdo{} \bbledn{}}, Cambridge University Press,
  Cambridge, \textbf{2007}.

\bibitem{LadLeg:83}
P.~Ladev\`{e}ze, D.~Leguillon, {\it SIAM J. Numer. Anal.} \textbf{1983}, {\it
  20}, 485--509.

\bibitem{AinOde:93}
M.~Ainsworth, J.~T. Oden, {\it Numer. Math.} \textbf{1993}, {\it 65}, 23--50.

\bibitem{LucWoh:04}
R.~Luce, B.~Wohlmuth, {\it SIAM J. Numer. Anal.} \textbf{2004}, {\it 42},
  1394--1414.

\bibitem{CaiZha:12a}
Z.~Cai, S.~Zhang, {\it SIAM J. Numer. Anal.} \textbf{2012}, {\it 50}, 151--170.

\bibitem{ErnVoh:15}
A.~Ern, M.~Vohral{\'{\i}}k, {\it SIAM J. Numer. Anal.} \textbf{2015}, {\it 53},
  1058--1081.

\bibitem{BraSch:08}
D.~Braess, J.~Sch{\"o}berl, {\it Math. Comp.} \textbf{2008}, {\it 77},
  651--672.

\bibitem{ParBonHuePer:06}
N.~Par{\'e}s, J.~Bonet, A.~Huerta, J.~Peraire, {\it Comput. Methods Appl. Mech.
  Engrg.} \textbf{2006}, {\it 195}, 406--429.

\bibitem{NicWitWoh:08}
S.~Nicaise, K.~Witowski, B.~Wohlmuth, {\it IMA J. Numer. Anal.} \textbf{2008},
  {\it 28}, 331--353.

\bibitem{Kim:11a}
K.-Y. Kim, {\it J. KSIAM} \textbf{2011}, {\it 16}, 1--13.

\bibitem{Kim:11b}
K.-Y. Kim, {\it SIAM J. Numer. Anal.} \textbf{2011}, {\it 49}, 2364--2385.

\bibitem{AinAllBarRan:12}
M.~Ainsworth, A.~Allendes, G.~R. Barrenechea, R.~Rankin, {\it IMA J. Numer.
  Anal.} \textbf{2012}, {\it 32}, 417--447.

\bibitem{HanSteVoh:12}
A.~Hannukainen, R.~Stenberg, M.~Vohral{\'{\i}}k, {\it Numer. Math.}
  \textbf{2012}, {\it 122}, 725--769.

\bibitem{BerMolSta:19a}
F.~Bertrand, M.~Moldenhauer, G.~Starke, {\it Comput. Meth. Appl. Math.}
  \textbf{2019}, To Appear in Print, DOI:10.1515/cmam-2018-0004.

\bibitem{BotRie:19}
M.~Botti, R.~Riedlbeck, {\it Comput. Meth. Appl. Math.} \textbf{2019}, To
  Appear in Print, DOI:10.1515/cmam-2018-0012.

\bibitem{HauHec:13}
P.~Hauret, F.~Hecht, {\it SIAM J. Sci. Comput.} \textbf{2013}, {\it 35},
  B291--B314.

\bibitem{BofBreFor:09}
D.~Boffi, F.~Brezzi, M.~Fortin, {\it Commun. Pure Appl. Anal.} \textbf{2009},
  {\it 8}, 95--121.

\bibitem{BerSta:16}
F.~Bertrand, G.~Starke, {\it SIAM J. Numer. Anal.} \textbf{2016}, {\it 54},
  3648Ñ--3667.

\end{thebibliography}



\end{document}